\newtheorem{theorem}{Theorem}[section]
\newtheorem{lemma}[theorem]{Lemma}
\newtheorem{definition}[theorem]{Definition}
\newtheorem{proposition}[theorem]{Proposition}
\newtheorem{example}[theorem]{Example}
\newtheorem{remark}[theorem]{Remark}
\def\<{\langle}
\def\>{\rangle}
\date{}
\begin{document}
\renewcommand{\baselinestretch}{1.2}
\renewcommand{\arraystretch}{1.0}
\title{\bf Cohomology of modified Rota-Baxter Leibniz algebra of weight $\lambda$}
\author{{\bf Bibhash Mondal$^{1}$,     Ripan Saha$^{2}$\footnote
        { Corresponding author (Ripan Saha),  Email: ripanjumaths@gmail.com}}\\
  {\small 1. Department of Mathematics, Behala College}\\
  {\small Behala, 700060, Kolkata, India}\\  
 {\small 2. Department of Mathematics, Raiganj University} \\
{\small  Raiganj 733134, West Bengal, India}}
 \maketitle
\begin{center}
\begin{minipage}{13.cm}

{\bf \begin{center} ABSTRACT \end{center}}
Rota-Baxter operators have been paid much attention in the last few decades as they have many applications in mathematics and physics. In this paper, our object of study is modified Rota-Baxter operators on Leibniz algebras. We investigate modified Rota-Baxter Leibniz algebras from the cohomological point of view. We study a one-parameter formal deformation theory of modified Rota-Baxter Leibniz algebras and define the associated deformation cohomology that controls the deformation. Finally, as an application, we characterize equivalence classes of abelian extensions in terms of second cohomology groups.

 \smallskip

{\bf Key words}: Leibniz algebra, Modified Rota-Baxter operator, Formal deformation, Cohomology, Abelian extension.
 \smallskip

 {\bf 2020 MSC:} 17A30,  17A32, 17B38, 17B56, 17B99.
 \end{minipage}
 \end{center}
 \normalsize\vskip0.5cm

\section{Introduction}

 \medskip
Rota-Baxter operator first appeared in fluctuation theory in Probability  \cite{Baxter}. It was further developed by  Rota \cite{Rota} and Cartier \cite{Cartier}. The Rota-Baxter operator has been studied on various algebraic structures like associative algebras \cite{das20}, Lie algebras \cite{guo12, GLS, Das3}, Pre-Lie algebras \cite{LHB}, Leibniz algebras \cite{MoS22}, etc. Leibniz algebras are often considered noncommutative generalizations of Lie algebras. Many important results about Lie algebras have been extended to Leibniz algebras.\\
A linear operator $T$ on  a Leibniz algebra $(\mathfrak{g},[~,~])$ is called  a Rota-Baxter operator of weight $\lambda\in \mathbb{K}$, where $\mathbb{K}$ being the ground field, if $[Ta,Tb]=T([a,Tb]+[Ta,b]) + \lambda T[a,b] $, for all $a,b \in \mathfrak{g}.$ For a given Leibniz algebra $(\mathfrak{g},[~,~])$, a modified Rota-Baxter operator of weight $\lambda$ on $(\mathfrak{g},[~,~])$ is a linear operator $T: \mathfrak{g}\rightarrow \mathfrak{g}$, such that $[Ta,Tb]=T([a,Tb]+[Ta,b])+\lambda [a,b]$ for all $a,b \in \mathfrak{g}.$
 For associative algebras, a close relationship between the above two operators is proved in \cite{Zhang1, Zhang2}. Tang, Sheng, and Zhou \cite{Tang} studied the deformation theory of the relative Rota-Baxter operator on Leibniz algebras. Mishra, Das, and Hazra \cite {Das3} studied non-abelian extensions of Rota-Baxter Lie algebras. The cohomology and abelian extensions of relative Rota-Baxter Lie algebras studied in \cite{Jiang}. The Rota-Baxter operator of arbitrary weight $\lambda$ on pre-Lie algebras studied in \cite{Guo}, the modified Rota-Baxter operator on an associative algebra studied in \cite{Das2}.\\
 
In the 1960s, parallel to the Analytical Deformation Theory, Murray Gerstenhaber \cite{G63, G64} studied the formal deformation theory of associative algebras. One needs a suitable cohomology, called deformation cohomology, which controls deformations in question to study the deformation theory of a type of algebras. Gerstenhaber showed that Hochschild cohomology controls one-parameter formal deformation of associative algebras. Nijenhuis and Richardson studied formal deformation theory for Lie algebras \cite{NR66}. There is a vast literature on this subject; for some related work, see \cite{fox, HLS, MS19, MS22, MoS22, saha}.
 
  This paper studies the modified Rota-Baxter operator of weight $\lambda$ on a Leibniz algebra. First we define the notion of  modified Rota-Baxter operator $T$ of weight $\lambda$ on a Leibniz algebra $(\mathfrak{g},[~,~])$. Now using this Rota-Baxter operator $T$, we define a new bilinear bracket $[a,b]_T=[a, Tb]+[Ta,b]$ for all $a,b \in \mathfrak{g}.$ Hence, we get a new Leibniz algebra $(\mathfrak{g},[~,~]_T)$ with a new representation $(V,l_V^{'},r_V^{'})$ induced by $T$. We use the Loday-Pirashvili {\cite{Loday}} cohomology to define the cohomology of modified Rota-Baxter Leibniz algebra of weight $\lambda.$ Next, we use our cohomology to study the formal deformation theory of the modified  Rota-Baxter Leibniz algebra of weight $\lambda$. Finally, we define an abelian extension of the modified Rota-Baxter operator of weight $\lambda$ and prove how equivalent classes of abelian extensions are related to cohomology groups.
 
 The paper is organized in the following way: In Section \ref{sec2}, we recall some basic definitions of Leibniz algebra, a representation of Leibniz algebra, and define the notion of modified Rota-Baxter of weight $\lambda$ on a Leibniz algebra and its representation. In Section \ref{sec3}, we get a new modified Rota-Baxter Leibniz algebra of weight $\lambda$ and a representation of it induced by a modified Rota-Baxter of weight $\lambda$ defined on a Leibniz algebra. In Section \ref{sec4}, we define the cohomology of the modified Rota-Baxter Leibniz algebra of weight $\lambda$ using Loday-Pirashvili cohomology of the induced Leibniz algebra. In Section \ref{sec5}, we discuss one-parameter formal deformation of modified Rota-Baxter Leibniz algebra of weight $\lambda$, and show that our cohomology is the deformation cohomology. Finally, in Section \ref{sec6}, we discuss an abelian extension of the modified Rota-Baxter operator of weight $\lambda$ and characterize extensions in terms of our second cohomology groups.

 Throughout this paper, all the vector spaces are over the field $\mathbb{K}$ of characteristic zero.

\section{Preliminaries}\label{sec2}
\def\theequation{\arabic{section}.\arabic{equation}}
\setcounter{equation} {0}
\def\theequation{\arabic{section}. \arabic{equation}}
\setcounter{equation} {0}
In this section, we recall some definitions and define the notion of the modified Rota-Baxter operator of weight $\lambda$ on a Leibniz algebra and its representation.
\begin{definition}[\cite{Loday}]
A Leibniz algebra is a vector space $\mathfrak{g}$ along with a bilinear product called bracket defined by  $[~,~] : \mathfrak{g} \times \mathfrak{g} \rightarrow  \mathfrak{g}$ such that the following condition(Leibniz identity) is satisfied 
\[[a,[b,c]]=[[a,b],c]+[b,[a,c]]~~~ \mbox{for all }~ a,b,c \in \mathfrak{g}.\] 
A Leibniz algebra $\mathfrak{g}$ with bracket $[~,~]$ is denoted by $(\mathfrak{g},[~,~]).$ Note that the above definition is the definition of left Leibniz algebra. In this paper, we consider left Leibniz algebra as Leibniz algebra.
\end{definition}
\begin{example}[\cite{Demir}] \label{exam}Consider the usual 2-dimensional vector space $\mathbb{R}^2$ with standard basis $\{e_1,e_2\}$. Now define a bilinear bracket by $[e_2,e_1]=[e_2,e_2]=e_1$, and all other brackets on basis elements being zero. Then $(\mathbb{R}^2,[~,~])$ is a Leibniz algebra.
\end{example}

\begin{definition}[\cite{Loday}]
Let $(\mathfrak{g},[~,~])$ be a Leibniz algebra and $V$ be a vector space. Suppose two maps  defined by $l_V : \mathfrak{g} \otimes V \rightarrow V$(called left action) and $r_V : V \otimes \mathfrak{g} \rightarrow V$(called right action) so that the following conditions are satisfied
\begin{align*}
l_V(a,l_V(b,v))=l_V([a,b],v)+l_V(b,l_V(a,v))\\
l_V(a,r_V(v,b))=r_V(l_V(a,v),b)+r_V(v,[a,b])\\
 r_V(v,[a,b])= r_V(r_V(v,a),b)+l_V(a,r_V(v,b)),
 \end{align*}
for all $a,b \in \mathfrak{g}$ and $v\in V.$ Then $(V,l_V,r_V)$ is called a \textbf {representation} of the Liebniz algebra $(\mathfrak{g},[~,~]).$
\end{definition} 

\begin{definition}[\cite{Tang}]
Let $(\mathfrak{g},[~,~])$ be a Leibniz algebra. A linear operator $T$ on $\mathfrak{g}$ is called  a \textbf{Rota-Baxter operator} on $(\mathfrak{g},[~,~])$ if $[Ta,Tb]=T([a,Tb]+[Ta,b])$ for all $a,b \in \mathfrak{g}.$
\end{definition}
\begin{definition}
Let $(\mathfrak{g},[~,~])$ be a Leibniz algebra. A linear operator $T$ on $\mathfrak{g}$ is called  a \textbf{Rota-Baxter operator of weight $\lambda \in \mathbb{K}$} on $(\mathfrak{g},[~,~])$ if $[Ta,Tb]=T([a,Tb]+[Ta,b]) + \lambda T[a,b]$, for all $a,b \in \mathfrak{g}.$
\end{definition}
\begin{definition}
Let $(\mathfrak{g},[~,~])$ be a Leibniz algebra. A \textbf{modified Rota-Baxter operator of weight $\lambda$} on $(\mathfrak{g},[~,~])$ is a linear operator $T: \mathfrak{g}\rightarrow \mathfrak{g}$, such that $[Ta,Tb]=T([a,Tb]+[Ta,b])+\lambda [a,b]$ for all $a,b \in \mathfrak{g}.$
\end{definition}
 
\begin{example}
Consider the Leibniz algebra $(\mathbb{R}^2,[~,~])$ defined in Example \ref{exam}. Then the linear map $T: \mathbb{R}^2 \rightarrow \mathbb{R}^2,x \mapsto Ax$ where $A= \begin{bmatrix}
1 &4 \\
0 & -3 \\
\end{bmatrix}$ is a modified Rota-Baxter operator of weight $-1$ on $(\mathbb{R}^2,[~,~]).$
\end{example}

Note that a modified Rota-Baxter operator of weight $0$ on a Leibniz algebra $(\mathfrak{g},[~,~])$  is the same as the Rota-Baxter operator on that Leibniz algebra.

\begin{definition}
A \textbf{modified Rota-Baxter Leibniz algebra of weight $\lambda$}  is a Leibniz algebra $(\mathfrak{g},[~,~])$ equipped with a modified Rota-Baxter operator $T : \mathfrak{g} \rightarrow \mathfrak{g}$ of weight $\lambda.$ We denote it by $(\mathfrak{g},[~,~],T).$
\end{definition}

\begin{remark}
Semenov-Tyan-Shanskii\cite{sem} observed intricate relationship between Rota-Baxter operator and the classical Yang-Baxter equation (CYBE) under specific conditions. Moreover, within the same discourse, a concept termed the modified classical Yang-Baxter equation (modified CYBE) is introduced, boasting solutions known as modified r-matrices. The modified CYBE, born from a transformation of the CYBE, has proven to be instrumental in multifaceted mathematical domains. Remarkably, despite its genesis from the CYBE, the modified CYBE plays an independent and pivotal role in mathematical physics, attracting various authors to examine its implications autonomously. Inspired by this line of inquiry, researchers in \cite{Zhang1, Zhang2} explored the associative counterpart of the modified CYBE, known as the modified associative Yang-Baxter equation of weight $\lambda$ (abbreviated as modified AYBE of weight $\lambda \in \mathbb{K}$). A solution to the modified AYBE of weight $\lambda$ is termed a modified Rota-Baxter operator of weight $\lambda$.

Rota-Baxter operator and modified Rota-Baxter operator are nicely related with each other \cite{FG}. We state the following results expressing their relationship and skip the routine proofs.
\begin{itemize}
\item[a)] Let $(\mathfrak{g}, T)$ be a Rota-Baxter operator of weight $\lambda$. Define $S:= -\lambda Id_{\mathfrak{g}} - 2T$. Then $(\mathfrak{g}, S)$ is a modified Rota-Baxter operator and vice-versa. Observe that a Rota-Baxter operator of weight zero is a modified Rota-Baxter operator of weight zero.
\item[b)] Let $\mathfrak{g}$ be a $\mathbb{K}$-algebra. Then $T: \mathfrak{g} \to \mathfrak{g}$ is modified Rota-Baxter operator if and only if $-T$ is modified Rota-Baxter operator.
\end{itemize}
There is a  close relationship between associative algebras and Lie algebras, and Leibniz algebras are often considered non-commutative generalizations of Lie algebras. In \cite{MoS22, Tang}, the authors studied Rota-Baxter operators on Leibniz algebras. This exploration naturally prompts an investigation into modified Rota-Baxter operators on Leibniz algebras.
\end{remark}
\begin{definition}
Let $(\mathfrak{g},[~,~],T)$ and $(\mathfrak{g^{'}},[~,~]^{'},{T^{'}})$ be two modified  Rota-Baxter of Leibniz algebra of weight $\lambda.$ Then a map $\phi : \mathfrak{g} \rightarrow \mathfrak{g^{'}}$ is called a \textbf{morphism of modified Rota-Baxter Leibniz algebra of weight $\lambda$} if the map $\phi$ is a Leibniz algebra morphism and $T^{'} \circ \phi = \phi \circ T.$
\end{definition}
\begin{definition}
Let $(\mathfrak{g},[~,~],T)$ be a modified Rota-Baxter Leibniz algebra of weight $\lambda$ and $V$ be a vector space. We define a representation of modified Rota-Baxter Leibniz algebra of weight $\lambda$ as a quadruple $(V,l_V,r_V,T_V)$, where $(V,l_V,r_V)$ is a representation of the Leibniz algebra $(\mathfrak{g},[~,~])$ and $T_V$ is a linear map on $V$ satisfies the following conditions  
\[l_V(Ta,T_V(v))=T_V(l_V(Ta,v)+l_V(a,T_V(v)))+\lambda l_V(a,v)\]
\[r_V(T_V(v),Ta)=T_V(r_V(T_V(v),a)+r_V(v,Ta))+\lambda r_V(v,a)\]
for all $a \in \mathfrak{g}$ and $v \in V.$
\end{definition}
Let $(\mathfrak{g},[~,~],T)$ be a modified Rota-Baxter Leibniz algebra of weight $\lambda.$ Now define $l_{\mathfrak{g}}: \mathfrak{g} \times \mathfrak{g} \mapsto \mathfrak{g}$ and $r_{\mathfrak{g}}: \mathfrak{g} \times \mathfrak{g} \mapsto \mathfrak{g}$ by $l_{\mathfrak{g}}(a,b)=r_{\mathfrak{g}}(a,b)=[a,b]$ for all $a,b \in \mathfrak{g}$. Then $(\mathfrak{g},l_{\mathfrak{g}},r_{\mathfrak{g}},T)$ is a representation of the modified Rota-Baxter Leibniz algebra $(\mathfrak{g},[~,~],T)$ of weight $\lambda.$ We call this representation as an adjoint representation of $(\mathfrak{g},[~,~],T)$.
\section{Induced modified Rota-Baxter Leibniz algebra of weight $\lambda$ and its representation}\label{sec3}
In this section, we study induced modified Rota-Baxter Leibniz algebra of weight $\lambda$ from a modified Rota-Baxter operator of weight $\lambda$.
\begin{theorem}
Let $(\mathfrak{g},[~,~],T)$ be a modified Rota-Baxter Leibniz algebra of weight $\lambda$. Now we define a bracket $[a,b]_T=[a,Tb]+[Ta,b]$ for all $a,b \in \mathfrak{g}.$ We have the following results 
\begin{itemize}
\item[(a)] $(\mathfrak{g},[~,~]_T)$ is a Leibniz algebra.
\item[(b)] $T$ is a modified Rota-Baxter operator of weight $\lambda$ on $(\mathfrak{g},[~,~]_T).$ 
\end{itemize}
\end{theorem}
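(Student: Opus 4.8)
The plan is to prove both parts by direct expansion, using only the definition $[x,y]_T=[x,Ty]+[Tx,y]$ together with the single algebraic observation that the defining relation of a modified Rota-Baxter operator can be rewritten as
\[
T\big([u,Tv]+[Tu,v]\big)=[Tu,Tv]-\lambda[u,v].
\]
This rewriting is the engine of the whole argument: whenever the operator $T$ is applied to a bracket of the shape $[\cdot,T\cdot]+[T\cdot,\cdot]$, which is exactly $T[\cdot,\cdot]_T$, I trade it for $[T\cdot,T\cdot]-\lambda[\cdot,\cdot]$ and thereby eliminate every nested occurrence of $T$ acting on a bracket.

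For part (a), I would write out the Leibniz identity for the new bracket, namely $[a,[b,c]_T]_T=[[a,b]_T,c]_T+[b,[a,c]_T]_T$, and expand each outer bracket by its definition. The inner terms $T[b,c]_T$, $T[a,b]_T$, $T[a,c]_T$ that appear are each replaced by the displayed rewriting above. After this substitution every surviving summand is an ordinary double bracket of the original product with various arguments prefixed by $T$, and the whole expression separates into a part carrying the scalar $\lambda$ and a part free of it. I expect the $\lambda$-part of the asserted equality to reduce, after cancelling the common factor $-\lambda$, to precisely the original Leibniz identity $[a,[b,c]]=[[a,b],c]+[b,[a,c]]$. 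For the $\lambda$-free part, I would apply the original Leibniz identity to each of the three left-hand summands $[a,[Tb,Tc]]$, $[Ta,[b,Tc]]$, and $[Ta,[Tb,c]]$; the six resulting terms should match, after reordering, the six $\lambda$-free terms produced on the right-hand side. This settles (a).

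Part (b) is more direct, and I would check it first as a warm-up. Expanding the left-hand side gives $[Ta,Tb]_T=[Ta,T^2b]+[T^2a,Tb]$. I then apply the defining modified Rota-Baxter relation to $[Ta,T^2b]$, taking the two arguments to be $a$ and $Tb$, and to $[T^2a,Tb]$, taking them to be $Ta$ and $b$. Summing the two expansions should reproduce exactly $T\big([a,Tb]_T+[Ta,b]_T\big)+\lambda[a,b]_T$ once the latter is itself expanded through the definition of $[~,~]_T$, confirming that $T$ remains a modified Rota-Baxter operator of weight $\lambda$ for the new bracket.

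The only genuine difficulty in either part is bookkeeping: the expansions generate many terms, and one must track the $T$-prefixes and the signs of the $\lambda$-terms with discipline. Conceptually nothing is needed beyond the two ingredients already in hand, the original Leibniz identity and the single rewriting of the modified Rota-Baxter relation, so I anticipate no real obstruction, only careful accounting.
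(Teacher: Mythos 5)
Your proposal is correct and follows essentially the same route as the paper: both arguments hinge on the rewriting $T([u,Tv]+[Tu,v])=[Tu,Tv]-\lambda[u,v]$ to eliminate $T$ acting on $[~,~]_T$-brackets, after which part (a) reduces to three applications of the original Leibniz identity (with the $\lambda$-terms cancelling against the Leibniz identity itself) and part (b) follows by applying the modified Rota-Baxter relation to $[Ta,T^2b]$ and $[T^2a,Tb]$ exactly as you describe. No gaps; the paper's proof is just the worked-out bookkeeping of your plan.
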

 \begin{proof}
 \begin{itemize}
 
 \item[(a)]
 Clearly, $[~,~]_T$ is a bilinear map.
 Now, for any $a,b,c \in  \mathfrak{g}$, we have
 \begin{align*}
 &[a,[b,c]_T]_T \\
 &=[a,[Tb,c]+[b,Tc]]_T\\
 &=[a,T([Tb,c]+[b,Tc])]+[Ta,[Tb,c]+[b,Tc]] \\
 &=[a,[Tb,Tc]-\lambda [a,[b,c]]+[Ta,[Tb,c]]+[Ta,[b,Tc]]\\
 &=[a,[Tb,Tc]]-\lambda [a,[b,c]]+[Ta,[Tb,c]]+[Ta,[b,Tc]].\\
 \end{align*}
 Similarly, we have 
 \[[[a,b]_T,c]_T=[[Ta,Tb],c]-\lambda [[a,b],c]+[[Ta,b],Tc]+[[a,Tb],Tc],\] and
 \[[b,[a,c]_T]_T=[b,[Ta,Tc]]-\lambda[b,[a,c]]+[Tb,[Ta,c]]+[Tb,[a,Tc]].\]
 
 Now using the Leibniz identity of $(\mathfrak{g},[~,~])$ we have $[a,[b,c]_T]_T=[[a,b]_T,c]_T+[b,[a,c]_T]_T$ for any $a,b,c \in \mathfrak{g}.$ Hence, $(\mathfrak{g},[~,~]_T)$ is  Leibniz algebra.
 \item[(b)] Again, for any $a,b \in \mathfrak{g}$, we have
 \begin{align*} 
 &[Ta,Tb]_{T}\\
&= [Ta,T(Tb)]+[T(Ta),Tb]\\
&= T([Ta,Tb]+[a,T(Tb)])+\lambda [a,Tb]+T([T(Ta),b]+[Ta,Tb])+\lambda [Ta,b]\\
&= T([Ta,b]_T+[a,Tb]_T)+\lambda [a,b]_T.
\end{align*}
Hence, $T$ is a modified Rota-Baxter operator of weight $\lambda$ on $(\mathfrak{g},[~,~]_T).$
 \end{itemize}
 \end{proof}
\begin{theorem}
Let $(\mathfrak{g},[~,~],T)$ be a modified Rota-Baxter Leibniz algebra of weight $\lambda$ and $(V,l_V,r_V,T_V)$ be a representation of it. Now define two maps $l_V^{'}: \mathfrak{g} \otimes V \rightarrow V$ and $r_V^{'}: V \otimes \mathfrak{g} \rightarrow V$ as follows 
\[l_V^{'}(a,v)=l_V(Ta,v)-(T_V\circ l_V)(a,v)~~ \mbox{and}~~ r_V^{'}(v,a)=r_V(v,Ta)-(T_V\circ r_V)(v,a),\]
for all $a \in \mathfrak{g}$ and $v \in V.$ Then  $(T,l_V^{'},r_V^{'},T_V)$ is a representation of the modified Rota-Baxter Leibniz algebra $(\mathfrak{g},[~,~]_T,T)$ of weight $\lambda$.
\end{theorem}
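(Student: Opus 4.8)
The goal is to verify that the quadruple $(V, l_V', r_V', T_V)$ — the symbol $T$ in the statement should read $V$ — satisfies the definition of a representation of the modified Rota-Baxter Leibniz algebra $(\mathfrak{g}, [~,~]_T, T)$. By that definition the task splits into two parts: (A) show that $(V, l_V', r_V')$ is a representation of the underlying Leibniz algebra $(\mathfrak{g}, [~,~]_T)$, which amounts to the three Leibniz-representation identities written with $l_V'$, $r_V'$ and the bracket $[~,~]_T$; and (B) show that $T_V$ satisfies the two modified Rota-Baxter compatibility conditions relative to $l_V'$, $r_V'$ and $T$. I denote the two defining conditions on $T_V$ in the original representation by $(\ell)$ and $(r)$.

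For part (A), I would substitute the definitions $l_V'(a,v) = l_V(Ta, v) - T_V(l_V(a,v))$ and $r_V'(v,a) = r_V(v, Ta) - T_V(r_V(v,a))$ into each of the three identities and expand. Each side then becomes a sum of terms built from $l_V$, $r_V$, $T$, $T_V$ and the original bracket, and the reduction relies on four inputs: the representation axioms of $(V, l_V, r_V)$ over $(\mathfrak{g}, [~,~])$; the two conditions $(\ell)$ and $(r)$ on $T_V$; and — the step that ties everything together — the fact that $T$ is a modified Rota-Baxter operator, which I rewrite as $T([a,b]_T) = [Ta, Tb] - \lambda [a,b]$. Concretely, each time a term $l_V'([a,b]_T, v)$ or $r_V'(v, [a,b]_T)$ arises, expanding the prime produces $l_V(T([a,b]_T), v)$, and substituting $T([a,b]_T) = [Ta, Tb] - \lambda[a,b]$ converts it into genuine left and right actions of the product $[Ta, Tb]$, to which the plain representation axioms apply; the terms of the form $l_V(Ta, T_V(\,\cdot\,))$ and $r_V(T_V(\,\cdot\,), Ta)$ are handled by $(\ell)$ and $(r)$.

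Part (B) is short and self-contained. For the left condition, expanding $l_V'(Ta, T_V v)$ and applying $(\ell)$ with $a$ replaced by $Ta$ cancels the terms $T_V\bigl(l_V(Ta, T_V v)\bigr)$ and yields $l_V'(Ta, T_V v) = T_V\bigl(l_V(T(Ta), v)\bigr) + \lambda\, l_V(Ta, v)$. Expanding the right-hand side $T_V\bigl(l_V'(Ta, v) + l_V'(a, T_V v)\bigr) + \lambda\, l_V'(a, v)$ directly leaves residual $T_V^2$-terms, and these are removed by applying $T_V$ to the identity $(\ell)$ taken at the original $a$; the two sides then coincide. The right modified Rota-Baxter condition follows by the symmetric computation with $(r)$.

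The main obstacle is bookkeeping rather than ideas, and it is concentrated in part (A): the first identity, with its nested left actions, produces the largest number of terms, and one must keep track of the signs coming from the $-T_V$ in the definitions of $l_V'$ and $r_V'$ while simultaneously invoking $(\ell)$, $(r)$, the Leibniz-representation axioms and the modified Rota-Baxter identity for $T$. I expect each of the three identities in (A) to close once these inputs are applied in the right order; the cleanest route is to first eliminate every occurrence of $T([a,b]_T)$ and of $T_V$ composed with an action, using the modified Rota-Baxter relations, and only afterwards invoke the plain representation axioms of $(V, l_V, r_V)$.
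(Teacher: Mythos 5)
Your proposal is correct and follows essentially the same route as the paper's own proof: direct expansion of $l_V'$, $r_V'$, elimination of $T([a,b]_T)$ via $T([a,b]_T)=[Ta,Tb]-\lambda[a,b]$, use of the conditions $(\ell)$, $(r)$ on $T_V$ (both at $a$ and at $Ta$), and the plain representation axioms of $(V,l_V,r_V)$; your part (B) computation, showing both sides equal $T_V\bigl(l_V(T(Ta),v)\bigr)+\lambda\, l_V(Ta,v)$, is a minor rearrangement of the paper's chain of equalities and checks out. You also correctly spotted the typo $(T,l_V',r_V',T_V)$ for $(V,l_V',r_V',T_V)$ in the statement.
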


\begin{proof}
Let $a,b \in \mathfrak{g},v \in V$. We have
\begin{align*}
&l_V^{'}(a,l_V^{'}(b,v))-l_V^{'}([a,b]_T,v)-l_V^{'}(b,l_V^{'}(a,v))\\
&=l_V(Ta,l_V^{'}(b,v))-(T_V\circ l_V)(a,l_V^{'}(b,v))\\
&-l_V(T([a,b]_T),v)+(T_V \circ l_V)([a,b]_T,v)\\
&-l_V(Tb,l_V^{'}(a,v))+(T_V\circ l_V)(b,l_V^{'}(a,v))\\
&=l_V(Ta,l_V(Tb,v))-l_V(Ta,(T_V\circ l_V)(b,v))\\
&-(T_V\circ l_V)(a,l_V(Tb,v))+(T_V\circ l_V)(a,(T_V\circ l_V) (b,v))\\
&-l_V([Ta,Tb],v)+\lambda~ l_V([a,b],v)\\
& +(T_V\circ l_V)([a,Tb],v)+(T_V\circ l_V)([Ta,b],v)\\
&-l_V(Tb,l_V(Ta,v))+l_V(Tb,(T_V\circ l_V) (a,v))\\
&+(T_V\circ l_V)(b,l_V(Ta,v))-(T_V\circ l_V)(b,(T\circ l_V)(a,v))\\
&=\bigg (l_V(Ta,l_V(Tb,v))-l_V([Ta,Tb],v)-l_V(Tb,l_V(Ta,v)) \bigg )\\
&-\bigg (l_V(Ta,(T_V\circ l_V)(b,v))-\lambda ~ l_V(a,l_V(b,v))\bigg)\\
&-(T_V\circ l_V)(a,l_V(Tb,v))+(T_V\circ l_V)(a,(T_V\circ l_V) (b,v))\\
&+(T_V\circ l_V)([a,Tb],v)+(T_V\circ l_V)([Ta,b],v)\\
&+\bigg (l_V(Tb,(T_V\circ l_V) (a,v))- \lambda ~ l_V(b,l_V(a,v))\bigg)+(T_V\circ l_V)(b,l_V(Ta,v))\\
&-(T_V\circ l_V)(b,(T_V\circ l_V)(a,v))\\
&=-T_V\bigg(l_V(Ta,l_V(b,v))+l_V(a,(T_V\circ l_V)(b,v))\bigg)-(T_V\circ l_V)(a,l_V(Tb,v))\\
&+(T_V\circ l_V)(a,(T_V\circ l_V) (b,v))+(T_V\circ l_V)([a,Tb],v)+(T_V\circ l_V)([Ta,b],v)\\
&+T_V\bigg(l_V(Tb,l_V(a,v))+l_V(b,(T_V\circ l_V)(a,v))\bigg)\\
&+(T_V\circ l_V)(b,l_V(Ta,v))-(T_V\circ l_V)(b,(T_V\circ l_V)(a,v))\\
&=\bigg (-(T_V \circ l_V)(Ta,l_V(b,v))+(T_V\circ l_V)([Ta,b],v)+(T_V\circ l_V)(b,l_V(Ta,v))\bigg )\\
&\bigg (-(T_V\circ l_V)(a,l_V(Tb,v))+(T_V\circ l_V)([a,Tb],v)+(T_V \circ l_V)(Tb,l_V(a,v)) \bigg )\\
&+\bigg ( -(T_V \circ l_V)(a,(T_V\circ l_V)(b,v))+ (T_V \circ l_V)(a,(T_V\circ l_V)(b,v))\bigg )\\
&+\bigg ((T_V \circ l_V)(b,(T_V \circ l_V)(a,v))-(T_V \circ l_V)(b,(T_V \circ l_V)(a,v))\bigg )\\
&=0.\\
\end{align*}

Therefore,
\[l_V^{'}(a,l_V^{'}(b,v))=l_V^{'}([a,b]_{T},v)+l_V^{'}(b,l_V^{'}(a,v))\]
holds.
Similarly, we have the following equations
\[l_V^{'}(a,r_V^{'}(v,b))=r_V^{'}(l_V^{'}(a,v),b)+r_V^{'}(v,[a,b]_{T}),\]
\[ r_V^{'}(v,[a,b]_{T})= r_V^{'}(r_V^{'}(v,a),b)+l_V^{'}(a,r_V^{'}(v,b))\]
holds for all $a,b \in \mathfrak{g}, v \in V.$
Hence,  $(V,l_V^{'},r_V^{'} )$ is a representation of the Leibniz algebra $(\mathfrak{g},[~,~]_T,T)$.
Also, we have
\begin{align*}
&T_V(l_V^{'}(Ta,v)+l_V^{'}(a,T_V(v)))+\lambda ~ l_V^{'}(a,v)\\
&=T_V\bigg(l_V(T(Ta),v)-(T_V\circ l_V)(Ta,v)+l_V(Ta,T_V(v))-(T_V\circ l_V)(a,T_V(v))\bigg)\\
&+\lambda ~(l_V(Ta,v)-(T_V \circ l_V)(a,v))\\
&=T_V\bigg(l_V(T(Ta),v)+l_V(Ta,T_V(v))\bigg)+ \lambda ~ l_V(Ta,v)-T_V\bigg( T_V(l_V(a,T_V(v))\\
&+l_V(Ta,v))+\lambda l_V(a,v)\bigg)\\
&=l_V(T(Ta),T_V(v))-T_V ( l_V(Ta,T_V(v)))\\
&=l_V(T(Ta),T_V(v))-(T_V \circ l_V)(Ta,T_V(v))\\
&=l_V^{'}(Ta,T_V(v)).\\
\end{align*}
Hence, 
\[ l_V^{'}(Ta,T_V(v))=T_V(l_V^{'}(Ta,v)+l_V^{'}(a,T_V(v)))+\lambda l_V^{'}(a,v)\]
holds for all $a \in \mathfrak{g}$ and $v \in V$. Similarly, it can be proven that 
\[r_V^{'}(T_V(v),Ta)=T_V(r_V^{'}(T_V(v),a)+r_V^{'}(v,Ta))+\lambda r_V^{'}(v,a)\]
for all $a \in \mathfrak{g}$ and $v \in V.$

Hence, $(V,l_V^{'},r_V^{'},T_V )$ is a representation of the modified Rota-Baxter Leibniz algebra $(\mathfrak{g},[~,~]_T,T)$ of weight $\lambda$.

\end{proof}
\section{Cohomology of modified Rota-Baxter Leibniz algebra of weight $\lambda$}\label{sec4}
In this section, we define the cohomology of modified Rota-Baxter Leibniz algebra of weight $\lambda$ using the Loday- Pirashvili cohomology (\cite{Loday}) for Leibniz algebras.

Let $(\mathfrak{g},[~,~],T)$ be a modified Rota-Baxter Leibniz algebra of weight $\lambda$ and $(V,l_V,r_V,T_V)$ be a representation of it. Then $(V,l_V,r_V)$ is a representation of the Leibniz algebra $(\mathfrak{g},[~,~])$. Now using the Loday- Pirashvili cohomology  (\cite{Loday}) for this Leibniz algebra, we have  abelian groups $C^n_{LA}(\mathfrak{g},V):=Hom(\mathfrak{g}^{\otimes n},V)$ and coboundary maps $\delta ^n: C^n_{LA}(\mathfrak{g},V) \rightarrow C^{n+1}_{LA}(\mathfrak{g},V)$  defined by
\begin{align*}
 &(\delta^n(f))(a_1,a_2,\ldots ,a_{n+1})\\
 & =\sum_{i=1}^{n}(-1)^{i+1}l_{V}(a_i,f(a_1,\ldots,\hat{a_i},\ldots, a_{n+1}))+(-1)^{n+1}r_{V}(f(a_1,\ldots,a_n),a_{n+1}) \\
 &+ \sum_{1 \leq i <j \leq n+1}(-1)^if(a_1,\ldots, \hat{a_i},\ldots ,a_{j-1},[a_i,a_j],a_{j+1},\ldots, a_{n+1}),
  \end{align*}
where $f\in C^n_{LA}(\mathfrak{g},V)$ and $a_1,\ldots,a_{n+1} \in \mathfrak{g}$. Now using the notation $l_V(a,v)=[a,v]$ and $r_V(v,a)=[v,a]$, the above coboundary map reduces to $\delta ^n: C^n_{LA}(\mathfrak{g},V) \rightarrow C^{n+1}_{LA}(\mathfrak{g},V)$ , where
 \begin{align*}
& (\delta^n(f))(a_1,a_2,\ldots ,a_{n+1})\\
 & =\sum_{i=1}^{n}(-1)^{i+1}[a_i,f(a_1,\ldots,\hat{a_i},\ldots, a_{n+1})]+(-1)^{n+1}[f(a_1,\ldots,a_n),a_{n+1}] \\
 &+ \sum_{1 \leq i <j \leq n+1}(-1)^if(a_1,\ldots, \hat{a_i},\ldots ,a_{j-1},[a_i,a_j],a_{j+1},\ldots, a_{n+1}),
  \end{align*}
where $f\in C^n_{LA}(\mathfrak{g},V)$ and $a_1,\ldots,a_{n+1} \in \mathfrak{g}$. 

Again, from the modified Rota-Baxter Leibniz algebra $(\mathfrak{g},[~,~],T)$ of weight $\lambda$ with representation $(V,l_V,r_V,T_V)$, we have the induced Leibniz algebra $(\mathfrak{g},[~,~]_T)$ with representation $(V,l_V^{'},r_V^{'})$. Hence, using the Loday- Pirashvili cohomology (\cite{Loday}) for this Leibniz algebra, we have the cochain groups and coboundary map as follows; for each  $n \geq 0$, we have   $C^n_{mRBO}(\mathfrak{g}, V):=Hom(\mathfrak{g}^{\otimes n}, V)$ and  coboundary map $\partial ^n: C^n_{mRBO}(\mathfrak{g}, V) \rightarrow C^{n+1}_{mRBO}(\mathfrak{g}, V)$ defined by
\begin{align*}
 &(\partial^n(f))(a_1,a_2,\ldots ,a_{n+1})\\
 &=\sum_{i=1}^{n}(-1)^{i+1}l^{'}_{V}(a_i,f(a_1,\ldots,\hat{a_i},\ldots, a_{n+1}))+(-1)^{n+1}r^{'}_{V}(f(a_1,\ldots,a_n),a_{n+1})\\
& + \sum_{1 \leq i <j \leq n+1}(-1)^if(a_1,\ldots, \hat{a_i},\ldots ,a_{j-1},[a_i,a_j]_{T},a_{j+1},\ldots, a_{n+1}),\\
\end{align*}
\begin{align*}
&=\sum _{i=1}^{n}(-1)^{i+1}[T(a_i),f(a_1,\ldots,\hat{a_i},\ldots, a_{n+1})] -\sum _{i=1}^{n}(-1)^{i+1}T_V([a_i,f(a_1,\ldots,\hat{a_i},\ldots, a_{n+1})])\\
&+(-1)^{n+1}[f (a_1,\ldots ,a_{n}),T(a_{n+1})] -(-1)^{n+1}T_V([f (a_1,\ldots ,a_{n}),a_{n+1}])\\
&+\sum_{1\leq i< j\leq n+1}(-1)^if (a_1,\ldots,\hat{a_i},\ldots,a_{j-1},[T(a_i),a_j]+[a_i,T(a_j)],a_{j+1},\ldots ,a_{n+1}),
 \end{align*}
where $f\in C^n_{mRBO}(\mathfrak{g},V)$ and $a_1,\ldots,a_{n+1} \in \mathfrak{g}$.\\
Now, motivated by the Proposition 3.2 of \cite{Das2} (similar result for associative algebra), we define
\begin{definition}
Let $(\mathfrak{g},[~,~],T )$ be a modified Rota-Baxter Leibniz algebra of weight $\lambda$  and $(V,l_V,r_V,T_V)$ be a representation of it. Now, we define a map  
\[\phi^{n} : C^{n}_{LA}(\mathfrak{g},V) \rightarrow C^{n}_{mRBO}(\mathfrak{g},V)\] 
as follows:
\begin{align*}
&\phi^0(f)=Id_V;\\
&\phi^n(f)(a_1,a_2,\ldots,a_n)=f(Ta_1,Ta_2,\ldots, Ta_n)\\
&-\sum _{1\leq i_1<i_2< \cdots < i_r\leq n ,r~ \mbox{odd}}{(-\lambda)^{\frac{r-1}{2}}} (T_V\circ f)(T(a_1),\ldots,a_{i_1},\ldots,a_{i_r},\ldots,T(a_n))\\
&-\sum _{1\leq i_1<i_2< \cdots < i_r\leq n ,r~ \mbox{even}}{(-\lambda)^{\frac{r}{2}+1}} (T_V\circ f)(T(a_1),\ldots,a_{i_1},\ldots,a_{i_r},\ldots,T(a_n)),
\end{align*}
where $n\geq 1$, $f\in C^n_{LA}(\mathfrak{g},V)$ and $a_1,\ldots,a_{n+1} \in \mathfrak{g}$.
\end{definition}
\begin{lemma}\label{lemma 4.2}
$\phi^{n+1}(\delta^n(f))(a_1,a_2,a_3,\ldots,a_{n+1})=\partial^n(\phi^n(f))(a_1,a_2,a_3,\ldots,a_{n+1})$, where $f\in C^n_{LA}(\mathfrak{g},V)$ and $a_1,\ldots,a_{n+1} \in \mathfrak{g}$. 
\end{lemma}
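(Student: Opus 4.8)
The plan is to establish the chain-map identity $\phi^{n+1}\circ\delta^{n}=\partial^{n}\circ\phi^{n}$ by fully expanding both sides into sums of elementary terms and then matching them term by term, the reconciliation being forced by the defining relations of the modified Rota-Baxter structure. Throughout, fix $f\in C^{n}_{LA}(\mathfrak{g},V)$ and $a_{1},\ldots,a_{n+1}\in\mathfrak{g}$.

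First I would expand the right-hand side $\partial^{n}(\phi^{n}(f))$. Substituting the definition of $\partial^{n}$ and then the formulas $l_V^{'}(a,v)=l_V(Ta,v)-T_V l_V(a,v)$, $r_V^{'}(v,a)=r_V(v,Ta)-T_V r_V(v,a)$, and $[a,b]_{T}=[Ta,b]+[a,Tb]$, each occurrence of $\phi^{n}(f)$ is in turn expanded through its own definition as the ``all-$T$'' evaluation $f(Ta_{1},\ldots,Ta_{n})$ minus the family of $T_V\circ f$ corrections indexed by the subsets of positions at which $T$ is dropped. This rewrites the right-hand side as a sum of terms in which $f$, $T_V\circ f$, or $T_V\circ T_V\circ f$ is applied to a string whose entries are each either $Ta_{k}$ or $a_{k}$, possibly with an inner bracket, every term weighted by a power of $-\lambda$.

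Symmetrically, I would expand the left-hand side $\phi^{n+1}(\delta^{n}(f))$: apply the $\phi^{n+1}$ formula to the cochain $\delta^{n}(f)$, producing $(\delta^{n}f)(Ta_{1},\ldots,Ta_{n+1})$ together with its $T_V\circ(\delta^{n}f)$ corrections over subsets, and then expand each copy of $\delta^{n}f$ through its definition in terms of $l_V,r_V$ and $[~,~]$. Here the internal bracket term of $\delta^{n}$ is met evaluated on $T$-decorated arguments, i.e. one encounters $f(\ldots,[Ta_{i},Ta_{j}],\ldots)$ as well as boundary action terms such as $l_V(Ta_{i},-)$ and $r_V(-,Ta_{n+1})$.

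The heart of the proof, and the step I expect to be the main obstacle, is reconciling these two expansions. Three identities do the work: the modified Rota-Baxter relation $[Ta_{i},Ta_{j}]=T([a_{i},Ta_{j}]+[Ta_{i},a_{j}])+\lambda[a_{i},a_{j}]$ on $\mathfrak{g}$, together with the two representation relations $l_V(Ta,T_V(v))=T_V(l_V(Ta,v)+l_V(a,T_V(v)))+\lambda l_V(a,v)$ and $r_V(T_V(v),Ta)=T_V(r_V(T_V(v),a)+r_V(v,Ta))+\lambda r_V(v,a)$. Each rewrites a ``both arguments $T$-ed'' term on the left-hand side as an outer-$T$ term plus a $\lambda$-shifted term carrying one fewer outer $T$; it is precisely the residual $\lambda[a_{i},a_{j}]$ (respectively $\lambda l_V(a,v)$, $\lambda r_V(v,a)$) that increments the number of dropped $T$'s and supplies the factor $-\lambda$ interpolating between the odd-$r$ coefficient $(-\lambda)^{\frac{r-1}{2}}$ and the even-$r$ coefficient $(-\lambda)^{\frac{r}{2}+1}$ in the definition of $\phi$. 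The remaining work is combinatorial bookkeeping: grouping all terms on both sides by the total number of factors $T_V$ they carry (equivalently, by the power of $\lambda$) and, within each group, by the location of the inner bracket, the relations above set up a bijection between the surviving terms while every term without a partner cancels against another in its group. Since each individual manipulation is one of the defining relations already available, the identity reduces to this matching; I would first run the computation for $n=1$ and $n=2$ to fix the sign and coefficient conventions, and then carry out the general comparison.
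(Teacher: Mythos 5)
Your overall strategy --- expand both sides completely and reconcile the two expansions using the modified Rota--Baxter identity on $\mathfrak{g}$ together with the two axioms $l_V(Ta,T_V(v))=T_V(l_V(Ta,v)+l_V(a,T_V(v)))+\lambda l_V(a,v)$ and $r_V(T_V(v),Ta)=T_V(r_V(T_V(v),a)+r_V(v,Ta))+\lambda r_V(v,a)$ --- is the right kind of argument, and it is essentially the computation that the paper's own proof defers to: the paper gives no expansion at all, saying only that the proof is ``similar to Proposition 5.2 of \cite{Wang}''. The three identities you isolate are indeed exactly the ones that drive the cancellation.

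However, as a proof your proposal has a genuine gap: the entire content of the lemma \emph{is} the ``combinatorial bookkeeping'' you postpone, and you assert the term-by-term bijection rather than exhibit it. This deferral is not safe, because the matching does \emph{not} come out with the coefficients you quote from the paper's definition of $\phi^n$. Run your own proposed test case $n=1$. On the right-hand side, $\partial^1(\phi^1 f)(a_1,a_2)$ contains $-l_V(Ta_1,T_Vf(a_2))$, $-r_V(T_Vf(a_1),Ta_2)$ and $-\phi^1(f)\bigl(T([a_1,Ta_2]+[Ta_1,a_2])\bigr)$; applying the two representation axioms to the first two terms and the modified Rota--Baxter identity $T([a_1,Ta_2]+[Ta_1,a_2])=[Ta_1,Ta_2]-\lambda[a_1,a_2]$ to the third, the residual terms not dressed by $T_V$ assemble to $-\lambda l_V(a_1,f(a_2))-\lambda r_V(f(a_1),a_2)+\lambda f([a_1,a_2])=-\lambda\,(\delta^1f)(a_1,a_2)$. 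So the chain-map identity forces the $r=2$ correction inside $\phi^2(\delta^1 f)(a_1,a_2)$ to be $-\lambda\,(\delta^1f)(a_1,a_2)$: first power of $\lambda$ and \emph{no} outer $T_V$. The printed definition instead assigns this term the value $-(-\lambda)^{\frac{r}{2}+1}\,T_V\bigl((\delta^1f)(a_1,a_2)\bigr)=-\lambda^{2}\,T_V\bigl((\delta^1f)(a_1,a_2)\bigr)$, and with that choice the identity is simply false whenever $\lambda\neq 0$. (The paper itself tacitly uses the corrected form in Section \ref{sec5}, where $\phi^2(\mu_1)(a_1,a_2)=\mu_1(Ta_1,Ta_2)-T\mu_1(a_1,Ta_2)-T\mu_1(Ta_1,a_2)-\lambda\mu_1(a_1,a_2)$.) So the even-$r$ corrections in $\phi^n$ must first be repaired (they should carry no $T_V$ and a coefficient of the form $\pm\lambda^{r/2}$) before any bijective matching can succeed; your plan to ``first run the computation for $n=1$ and $n=2$ to fix the sign and coefficient conventions'' is precisely the missing proof, not a preliminary to it, and had you run it you would have discovered that the statement needs this correction to be true at all.
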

\begin{proof}
The proof is similar to Proposition 5.2 of \cite{Wang}.
\end{proof}
Now, using the above lemma, we get the following commutative diagram
\[\begin{tikzcd}
	{C^1_{LA}(\mathfrak{g},V)} & {C^2_{LA}(\mathfrak{g},V)} & {C^n_{LA}(\mathfrak{g},V)} & {C^{n+1}_{LA}(\mathfrak{g},V)} & {} \\
	{C^1_{mRBO}(\mathfrak{g},V)} & {C^2_{mRBO}(\mathfrak{g},V)} & {C^n_{mRBO}(\mathfrak{g},V)} & {C^{n+1}_{mRBO}(\mathfrak{g},V)} & {}
	\arrow[from=1-1, to=1-2,]{r}{\delta^1}
	\arrow[from=1-1, to=2-1]{d}{\phi^1}
	\arrow[from=1-2, to=2-2]{d}{\phi^2}
	\arrow[from=2-1, to=2-2]{r}{\partial^1}
	\arrow[dotted, no head, from=1-2, to=1-3]
	\arrow[dotted, no head, from=2-2, to=2-3]
	\arrow[from=1-3, to=1-4]{r}{\delta^n}
	\arrow[from=1-3, to=2-3]{d}{\phi^n}
	\arrow[from=2-3, to=2-4]{d}{\partial^n}
	\arrow[from=1-4, to=2-4]{r}{\phi^{n+1}}
	\arrow[dotted, no head, from=1-4, to=1-5]
	\arrow[dotted, no head, from=2-4, to=2-5].
\end{tikzcd}\]

\begin{definition}
Let $(\mathfrak{g},[~,~],T)$ be a modified Rota-Baxter Leibniz algebra of weight $\lambda$ and $(V,l_V,r_V,T_V)$ be a representation of it. Now, we define 
\[C^{0}_{mRBLA}(\mathfrak{g},V):= C^{0}_{LA}(\mathfrak{g},V) ~~ \mbox{and}~~ C^n_{mRBLA}(\mathfrak{g},V):= C^n_{LA}(\mathfrak{g},V)\oplus C_{mRBO}^{n-1}(\mathfrak{g},V), \forall n \geq 1,\] and a map
$d^n: C^n_{mRBLA}(\mathfrak{g},V) \rightarrow C^{n+1}_{mRBLA}(\mathfrak{g},V)$  by
\[d^n (f , g)=(\delta^n(f),- \partial^{n-1}(g)-\phi^n(f) )\]
for any $f \in C^n_{LA}(\mathfrak{g},V)$ and $g \in C^{n-1}_{mRBO}(\mathfrak{g},V).$
\end{definition}
\begin{theorem}
The map $d^n: C^n_{mRBLA}(\mathfrak{g},V) \rightarrow C^{n+1}_{mRBLA}(\mathfrak{g},V) $ defined above satisfies $d^{n+1}\circ d^n=0,$ hence, it is a coboundary map.
\end{theorem}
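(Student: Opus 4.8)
The plan is to verify $d^{n+1}\circ d^n = 0$ by direct computation on an arbitrary pair $(f,g)$ with $f \in C^n_{LA}(\mathfrak{g},V)$ and $g \in C^{n-1}_{mRBO}(\mathfrak{g},V)$, unwinding the definition of $d^n$ twice and reducing everything to facts I already have available. Applying the definition once gives $d^n(f,g) = (\delta^n(f),\,-\partial^{n-1}(g)-\phi^n(f))$, and then applying $d^{n+1}$ to this pair produces
\[
d^{n+1}\big(d^n(f,g)\big) = \Big(\delta^{n+1}(\delta^n(f)),\; -\partial^n\big(-\partial^{n-1}(g)-\phi^n(f)\big) - \phi^{n+1}(\delta^n(f))\Big).
\]
So the whole statement splits into showing that each of the two components vanishes.

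The first component is $\delta^{n+1}\circ\delta^n(f)$, and this is zero because $\delta$ is the Loday--Pirashvili coboundary of the Leibniz algebra $(\mathfrak{g},[~,~])$ with representation $(V,l_V,r_V)$; that $\delta^{n+1}\circ\delta^n=0$ is the defining property of a cochain complex and is exactly the content of the Loday--Pirashvili cohomology recalled at the start of Section~\ref{sec4} (reference \cite{Loday}). I would simply cite this. The second component, after expanding the outer $\partial^n$ by linearity, is
\[
\partial^n\partial^{n-1}(g) + \partial^n\phi^n(f) - \phi^{n+1}\delta^n(f).
\]
Here $\partial^n\circ\partial^{n-1}(g)=0$ for the same structural reason: $\partial$ is the Loday--Pirashvili coboundary for the \emph{induced} Leibniz algebra $(\mathfrak{g},[~,~]_T)$ with the induced representation $(V,l_V^{'},r_V^{'})$, which is a genuine representation by the second theorem of Section~\ref{sec3}, so $\partial$ squares to zero. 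The remaining two terms cancel precisely because of Lemma~\ref{lemma 4.2}, which asserts $\phi^{n+1}(\delta^n(f)) = \partial^n(\phi^n(f))$; subtracting gives $\partial^n\phi^n(f)-\phi^{n+1}\delta^n(f)=0$. Hence the second component is $0+\big(\partial^n\phi^n(f)-\phi^{n+1}\delta^n(f)\big)=0$.

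I expect the main obstacle to be nothing more than bookkeeping: one has to be careful that the three identities being invoked ($\delta^2=0$, $\partial^2=0$, and the intertwining Lemma~\ref{lemma 4.2}) are applied in the correct degrees and that the sign on $\phi^n(f)$ inside $d^n$ propagates correctly through the outer map. The only genuine mathematical inputs are (i) that both $\delta$ and $\partial$ are honest differentials, which rests on $(V,l_V,r_V)$ and $(V,l_V^{'},r_V^{'})$ being representations of their respective Leibniz algebras, and (ii) the chain-map property of $\phi^\bullet$ from Lemma~\ref{lemma 4.2}. Granting those, the proof is a two-line collapse once the components are separated, so I would write it as: expand $d^{n+1}\circ d^n$, identify the first coordinate as $\delta^{n+1}\delta^n(f)=0$, expand the second coordinate into $\partial^n\partial^{n-1}(g)+\partial^n\phi^n(f)-\phi^{n+1}\delta^n(f)$, and kill the three pieces using $\partial^2=0$ and Lemma~\ref{lemma 4.2} respectively, concluding $d^{n+1}\circ d^n(f,g)=(0,0)$ for all $(f,g)$.
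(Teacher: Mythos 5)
Your proposal is correct and follows essentially the same route as the paper's own proof: expand $d^{n+1}\circc d^n(f,g)$ componentwise, kill the first coordinate by $\delta^{n+1}\circ\delta^n=0$, and kill the second by $\partial^n\circ\partial^{n-1}=0$ together with the chain-map identity $\phi^{n+1}\circ\delta^n=\partial^n\circ\phi^n$ of Lemma~\ref{lemma 4.2}. The only difference is that you spell out explicitly why $\delta$ and $\partial$ square to zero (both being Loday--Pirashvili differentials for genuine representations), which the paper leaves implicit.
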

\begin{proof}
Suppose  $f\in C^n_{LA}(\mathfrak{g},V) $ and $g\in C^{n-1}_{mRBO}(\mathfrak{g},V)$, observe that 
\begin{align*}
d^{n+1}\circ d^{n}(f,g)&= d^{n+1}(\delta ^n(f),-\partial ^{n-1}(g)-\phi ^{n}(f))\\
&=( \delta ^{n+1}(\delta ^n(f)),-\partial^{n}(-\partial^{n-1}(g)
-\phi^n(f))-\phi^{n+1}(\delta ^n(f)))\\
&=(0,~\partial^n(\phi^n(f))-\phi^{n+1}(\delta^n(f)))~~~(\mbox{by Lemma \ref{lemma 4.2}})\\
&=0.
\end{align*}
\end{proof}
From the above theorem it follows that $\mathbf{\{C^n_{mRBLA}(\mathfrak{g},V),d^n\}}$ is a cochain complex. We define this cochain complex as the cochain complex of the modified Rota-Baxter Leibniz algebra of weight $\lambda$ with coefficients in $V.$ Let $Z_{mRBLA}^n(\mathfrak{g},V)$ denote the space of $n$-cocycles, and $B_{mRBLA}^n(\mathfrak{g},V)$ denote the space of $n$-coboundaries. Then it follows that $ B_{mRBLA}^n(\mathfrak{g},V) \subset Z_{mRBLA}^n(\mathfrak{g},V)$ for $n \geq 0.$ Now, the quotient groups defined by \[H_{mRBLA}^n(\mathfrak{g},V):=\frac{Z_{mRBLA}^n(\mathfrak{g},V)}{B_{mRBLA}^n(\mathfrak{g},V)}
,~~ \mbox{for} ~n \geq 0,\]
are called the cohomology groups of the modified Rota-Baxter Leibniz algebra $(\mathfrak{g},[~,~],T)$ of weight $\lambda.$

\begin{remark}
The fundamental concept within deformation theory asserts that across a field of characteristic 0, any deformation theory can be encapsulated by means of a differential graded Lie algebra. To elaborate, when presented with an underlying ``space" (such as a manifold or chain complex) along with a particular type of structure, there ought to exist a differential graded Lie algebra such that the structures of that type on that space are in one-to-one correspondence with the Maurer–Cartan elements of that differential graded Lie algebra.

Using Gerstenhaber bracket on Leibniz algebras (cf. Section 2 of \cite{MS23}) and ideas from the papers \cite{das20, uch}, one can construct an explicit graded Lie algebra whose Maurer-Cartan elements are precisely modified Rota-Baxter operators on Leibniz algebras. This necessitates several additional pages of effort, but we will not be implementing this in our current paper. Consequently, we refrain from delving into this direction and will not elaborate on these details here.
\end{remark}

\section{One-parameter formal  deformation of a modified Rota-Baxter Leibniz algebra of weight $\lambda$}\label{sec5}
In this section, we study a one-parameter formal deformation of modified Rota-Baxter Leibniz algebra of weight $\lambda.$ We use the notation $\mu$ for the bilinear product $[~,~]$ and the adjoint representation for modified Rota-Baxter Leibniz algebra of weight $\lambda.$
\begin{definition}
Let $(\mathfrak{g},\mu,T)$ be a modified Rota-Baxter Leibniz algebra of weight $\lambda$. A one-parameter formal deformation of $(\mathfrak{g},\mu,T)$ is a pair of power series $(\mu_t,T_t),$
\[ \mu_t=\sum _{i=0}^{\infty}\mu_it^i, ~ \mu_{i} \in C^2_{LA}(\mathfrak{g},\mathfrak{g}),~~~~ T_t= \sum_{i=0}^{\infty} T_it^i,~ T_i \in C^1_{mRBO}(\mathfrak{g},\mathfrak{g}),\]
such that $(\mathfrak{g}[[t]],\mu_t,{T_t})$ is a modified Rota-Baxter Leibniz algebra of weight $\lambda$, where  $(\mu_0,T_{0})=(\mu ,T).$
\end{definition}
Therefore, $(\mu_t, T_t)$ will be a formal one-parameter deformation of a modified Rota-Baxter Leibniz algebra $(\mathfrak{g},\mu,T)$ of weight $\lambda$ if and only if the following conditions are satisfied 
\[\mu_t(a,\mu_t(b,c))=\mu_t(\mu_t(a,b),c)+\mu_t(b,\mu_t(a,c)),\]
and \[ \mu_t(T_t(a),T_t(b))=T_t(\mu_t(a,T_t(b))+\mu_t(T_t(a),b))+ \lambda~ \mu_t (a,b),\]
for any $a,b,c \in \mathfrak{g}$.
Expanding the above equations and equating the coefficients of $t^n$($n$ non-negative integer) from both sides, we get 
\begin{align}
\sum _{\substack{i+j=n \\i,j\geq 0}} \mu_i(a,\mu_j (b,c))=\sum _{\substack{i+j=n \\i,j\geq 0}}\mu_i(\mu _j(a,b),c)+\sum _{\substack{i+j=n \\i,j\geq 0}} \mu_i(b, \mu_j (a,c)),
\end{align}
and 
\begin{align}
\sum_{\substack{i+j+k=n \\ i,j,k \geq 0}} \mu_i(T_j(a),T_k(b))=\sum_{\substack{i+j+k=n \\ i,j,k \geq 0}}T_i(\mu_j(T_k(a),b))+\sum_{\substack{i+j+k=n \\ i,j,k \geq 0}}T_i(\mu_j(a,T_k(b)))+ \lambda \mu_n(a,b).
\end{align}
Note that for $n=0$, the above equation is precisely the Leibniz identity of $(\mathfrak{g},\mu)$ and the condition for modified Rota-Baxter operator of weight $\lambda$ respectively.
\\ 
Now, putting $n=1$ in the above equations, we get 
\[ \mu (a,\mu_1(b,c))+\mu_1(a,\mu (b,c))= \mu (\mu_1(a,b),c)+\mu_1(\mu (a,b),c) \\
+\mu_1 (b, \mu (a,c))+\mu (b, \mu_1(a,c)),\]
and 
\begin{align*}
&\mu_1(T(a_1),T(a_2))+\mu (T_1(a_1),T(a_2))+\mu (T(a_1),T_1(a_2))-T_1(\mu (T(a_1),a_2))-T(\mu (T_1(a_1),a_2)) \\
&-T(\mu _1 (T(a_1),a_2))-T_1(\mu (a_1,T(a_2)))-T(\mu _1 (a_1,T(a_2)))-T(\mu (a_1,T_1(a_2)))-\lambda ~\mu_1 (a_1,a_2)=0,
\end{align*}
where $a,b,c \in \mathfrak{g}.$ From the first equation we get $\delta^2(\mu_1)(a,b,c)=0$ and from the second equation, we have 
\begin{align*}
-\partial ^1(T_1)(a_1,a_2)&=  -T(\mu_1 (a_1,T(a_2)))-T(\mu_1(T(a_1),a_2))+\mu_1(T(a_1),T(a_2))-\lambda ~\mu_1 (a_1,a_2)\\
                  &= \phi ^2(\mu_1)(a_1,a_2).
\end{align*}
Therefore, $-\partial^(T_1)-\phi^2(\mu_1)=0$. Hence, $d^2(\mu_1,T_1)=0$.\\
 This proves $(\mu_1,T_1)$ is a $2$-cocycle in the cochain complex $\{C^{n}_{mRBLA}(\mathfrak{g},\mathfrak{g}),d^n\}.$
 Thus, from the above discussion, we have the following theorem.
 \begin{theorem}\label{infy-co}
Let $(\mu_t, T_t)$ be a one-parameter formal deformation of a modified Rota-Baxter Leibniz algebra $(\mathfrak{g},\mu,T)$ of weight $\lambda$. Then $(\mu_1,T_1)$ is a $2$-cocycle in the cochain complex $\{C^{n}_{mRBLA}(\mathfrak{g},\mathfrak{g}),d^n\}.$
\end{theorem}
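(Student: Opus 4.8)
The plan is to exploit the two identities that characterize $(\mu_t,T_t)$ as a formal deformation, expand both as power series in $t$, and isolate the coefficient of $t^1$; the two resulting equations should be exactly the two components of $d^2(\mu_1,T_1)=0$. As a preliminary sanity check I would note that the coefficient of $t^0$ in the formal Leibniz identity and in the formal modified Rota-Baxter identity recovers, respectively, the Leibniz identity for $\mu=\mu_0$ and the weight-$\lambda$ modified Rota-Baxter condition for $T=T_0$, consistent with $(\mu_0,T_0)=(\mu,T)$ being the given modified Rota-Baxter Leibniz algebra.

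First I would collect the coefficient of $t^1$ in the formal Leibniz identity. Regrouping the resulting terms and comparing them with the Loday-Pirashvili coboundary $\delta^2$ evaluated on the adjoint representation (where $l_V=r_V=\mu$) shows that this equation is precisely $\delta^2(\mu_1)(a,b,c)=0$, which is the first required component.

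Next I would collect the coefficient of $t^1$ in the formal modified Rota-Baxter identity. This yields a sum of degree-one terms, some linear in $T_1$ and some linear in $\mu_1$. The crucial move is to sort these into two packets: the $T_1$-linear terms, rewritten through the induced action maps $l_V^{'},r_V^{'}$ (with $T_V=T$) from Section~\ref{sec3}, reproduce $-\partial^1(T_1)(a,b)$, while the $\mu_1$-linear terms assemble into $\phi^2(\mu_1)(a,b)$ according to the definition of $\phi^2$. Hence the degree-one modified Rota-Baxter equation reads $-\partial^1(T_1)-\phi^2(\mu_1)=0$. Combining the two conclusions through the definition $d^2(\mu_1,T_1)=(\delta^2(\mu_1),-\partial^1(T_1)-\phi^2(\mu_1))$ gives $d^2(\mu_1,T_1)=(0,0)$, so $(\mu_1,T_1)\in Z^2_{mRBLA}(\mathfrak{g},\mathfrak{g})$.

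I expect the main obstacle to be the sign and index bookkeeping in the modified Rota-Baxter packet: one must verify that the degree-one terms fold exactly into $-\partial^1(T_1)$ and $\phi^2(\mu_1)$, which hinges on the adjoint-representation identifications $l_V=r_V=\mu$ and $T_V=T$, the explicit formulas for $l_V^{'},r_V^{'}$, and the combinatorial shape of $\phi^2$. By contrast, the Leibniz packet is routine once its terms are matched against $\delta^2$.
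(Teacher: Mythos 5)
Your proposal is correct and follows essentially the same route as the paper's own argument: the paper likewise expands the two deformation identities in powers of $t$, extracts the coefficient of $t^1$, identifies the Leibniz packet with $\delta^2(\mu_1)=0$, and regroups the modified Rota--Baxter packet (using the adjoint representation, so $l_V=r_V=\mu$ and $T_V=T$) into the identity $-\partial^1(T_1)=\phi^2(\mu_1)$, concluding $d^2(\mu_1,T_1)=(\delta^2(\mu_1),-\partial^1(T_1)-\phi^2(\mu_1))=0$. The sign and index bookkeeping you flag as the main obstacle is indeed the only delicate point, and it is exactly where the paper's computation does its work.
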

\begin{definition}
The $2$-cocycle $(\mu_1,T_1)$ is called the infinitesimal of the formal one-parameter deformation $(\mu_t,T_t)$ of the modified Rota-Baxter Leibniz algebra $(\mathfrak{g},\mu,T)$ of weight $\lambda$.
\end{definition}
\begin{definition}
Let $(\mu_t, T_t)$ and $(\mu_t^{'}, T_t^{'})$ be two formal one-parameter deformations of a modified Rota-Baxter Leibniz algebra $(\mathfrak{g},\mu,T )$. A formal isomorphism between these two deformations is a power series $\psi _t=\sum _{i=0}^{\infty}\psi_i t^i: \mathfrak{g}[[t]] \rightarrow \mathfrak{g}[[t]]$, where $\psi_i: \mathfrak{g} \rightarrow \mathfrak{g}$ are linear maps and  $\psi_0=Id_{\mathfrak{g}}$ such that the following conditions are satisfied
\begin{align}
&\psi_t \circ \mu^{'}_t=\mu_t \circ (\psi_t \otimes \psi_t),\\
& \psi_t \circ T_t^{'}=T_{t} \circ \psi_t.  
\end{align}
\end{definition}
Now expanding the equations (5.3) and (5.4) and equating the coefficients of $t^n$ from both the sides we get  
\begin{align}
&\sum_{\substack {i+j=n \\ i,j\geq 0}}\psi _i(\mu_j^{'}(a,b))=\sum_{\substack {i+j+k=n \\ i,j,k\geq 0}}\mu_i(\psi_j(a),\psi_k(b)),~~ a,b \in \mathfrak{g}.\\
& \sum_{\substack {i+j=n \\ i,j\geq 0}}\psi _i \circ T^{'}_j=\sum_{\substack {i+j=n \\ i,j\geq 0}} T_i \circ \psi _j.
\end{align}
Now putting $n=1$ in the above equation, we get 
\begin{align*}
&\mu^{'}_1(a,b)=\mu_1(a,b)+\mu (a,\psi_1 (b))+\mu (\psi_1(a),b)-\psi_1(\mu (a,b)) ,~~ a,b \in \mathfrak{g}\\
& T_1^{'}=T_1+T \circ \psi_1-\psi _1 \circ T.
\end{align*}
Therefore, we have
\[(\mu_1^{'},T_1^{'})-(\mu_1,T_1)=(\delta^1(\psi_1),-\phi^1(\psi_1))=d^1(\psi_1,0) \in C^{1}_{mRBLA}(\mathfrak{g},\mathfrak{g}).\]
Hence, from the above discussion, we have the following theorem.
\begin{theorem}
The infinitesimals of two equivalent one-parameter formal deformation of a modified Rota-Baxter Leibniz algebra $(\mathfrak{g} , \mu,T )$ of weight $\lambda$ are in the same cohomology class. 

\end{theorem}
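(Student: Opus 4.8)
The plan is to exploit the formal isomorphism $\psi_t = \sum_{i=0}^{\infty}\psi_i t^i$ with $\psi_0 = Id_{\mathfrak{g}}$ that the assumed equivalence provides, and to read off its degree-one content. First I would take the two defining relations of a formal isomorphism, $\psi_t \circ \mu_t^{'} = \mu_t \circ (\psi_t \otimes \psi_t)$ and $\psi_t \circ T_t^{'} = T_t \circ \psi_t$, expand both sides as power series in $t$, and collect the coefficient of $t^1$. Using $\psi_0 = Id_{\mathfrak{g}}$, $\mu_0 = \mu$ and $T_0 = T$, this yields exactly the two identities displayed just above the statement, namely
\[
\mu_1^{'}(a,b) = \mu_1(a,b) + \mu(a,\psi_1(b)) + \mu(\psi_1(a),b) - \psi_1(\mu(a,b)),
\]
\[
T_1^{'} = T_1 + T \circ \psi_1 - \psi_1 \circ T .
\]

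Next I would identify the correction terms as the two components of a single coboundary, working in the adjoint representation (so that $l_{\mathfrak{g}}(a,b)=r_{\mathfrak{g}}(a,b)=\mu(a,b)$ and $T_V = T$). Specializing the Loday--Pirashvili differential to $n=1$ gives
\[
(\delta^1\psi_1)(a,b) = \mu(a,\psi_1(b)) + \mu(\psi_1(a),b) - \psi_1(\mu(a,b)),
\]
so the first identity reads $\mu_1^{'} - \mu_1 = \delta^1(\psi_1)$. For the second component I would specialize the definition of $\phi^n$ to $n=1$: the only admissible index string is $r=1$, $i_1=1$, which forces $\phi^1(\psi_1)(a) = \psi_1(Ta) - T(\psi_1(a))$. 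Hence $T_1^{'} - T_1 = T\circ\psi_1 - \psi_1\circ T = -\phi^1(\psi_1)$.

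Finally I would assemble these facts using the definition of $d^1$. Since $\partial^0(0)=0$, we have $d^1(\psi_1,0) = (\delta^1(\psi_1),\, -\partial^0(0) - \phi^1(\psi_1)) = (\delta^1(\psi_1),\, -\phi^1(\psi_1))$, and the two coefficient computations combine to give
\[
(\mu_1^{'},T_1^{'}) - (\mu_1,T_1) = (\delta^1(\psi_1),\, -\phi^1(\psi_1)) = d^1(\psi_1,0) \in B^2_{mRBLA}(\mathfrak{g},\mathfrak{g}).
\]
Because both $(\mu_1,T_1)$ and $(\mu_1^{'},T_1^{'})$ are $2$-cocycles by Theorem \ref{infy-co} and they differ by the coboundary $d^1(\psi_1,0)$, they represent the same class in $H^2_{mRBLA}(\mathfrak{g},\mathfrak{g})$, which is the claim.

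The routine part is the coefficient extraction; the one step deserving genuine care is the $n=1$ specialization of $\phi^n$. The general formula carries two alternating sums over odd- and even-length index strings $1\le i_1 < \cdots < i_r \le n$, and I would need to verify that in degree $n=1$ these collapse to the single $r=1$ contribution, so that $\phi^1(\psi_1)$ is precisely the commutator $\psi_1\circ T - T\circ\psi_1$; this is where a sign or a spurious $\lambda$-power could easily slip in, so I expect it to be the main bookkeeping obstacle.
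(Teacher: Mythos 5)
Your proposal is correct and follows essentially the same route as the paper: expand the two formal-isomorphism identities, extract the coefficient of $t$, and recognize $(\mu_1^{'},T_1^{'})-(\mu_1,T_1)=(\delta^1(\psi_1),-\phi^1(\psi_1))=d^1(\psi_1,0)$ as a coboundary. Your explicit check that $\phi^1(\psi_1)(a)=\psi_1(Ta)-T(\psi_1(a))$ (the $n=1$ collapse of the index sums) is a detail the paper glosses over, but it confirms rather than alters the argument.
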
 
\begin{definition}
A modified Rota-Baxter Leibniz algebra $(\mathfrak{g}, \mu,T )$ of weight $\lambda$ is called rigid if every formal one-parameter deformation is trivial.
\end{definition}
\begin{theorem}
Let $(\mathfrak{g}, \mu,T)$ be a modified Rota-Baxter Leibniz algebra of weight $\lambda.$ Then $(\mathfrak{g}, \mu,T)$ is rigid if $H^2_{mRBLA}(\mathfrak{g},\mathfrak{g})=0.$
\end{theorem}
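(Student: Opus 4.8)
The plan is to run the classical Gerstenhaber rigidity argument inside the cochain complex $\{C^n_{mRBLA}(\mathfrak{g},\mathfrak{g}),d^n\}$: starting from an arbitrary formal deformation, I would repeatedly absorb its lowest nonzero term by a formal isomorphism, using $H^2_{mRBLA}(\mathfrak{g},\mathfrak{g})=0$ to guarantee that each such term is a coboundary and hence removable.

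First I would fix an arbitrary one-parameter formal deformation $(\mu_t,T_t)$ of $(\mathfrak{g},\mu,T)$. By Theorem \ref{infy-co} its infinitesimal $(\mu_1,T_1)$ is a $2$-cocycle, so the hypothesis $H^2_{mRBLA}(\mathfrak{g},\mathfrak{g})=0$ yields $\psi_1\in C^1_{LA}(\mathfrak{g},\mathfrak{g})$ with $(\mu_1,T_1)=d^1(\psi_1,0)=(\delta^1(\psi_1),-\phi^1(\psi_1))$. Taking the formal isomorphism $\psi_t=\mathrm{Id}_{\mathfrak{g}}-\psi_1 t$ and defining $(\mu_t',T_t')$ by $\psi_t\circ\mu_t'=\mu_t\circ(\psi_t\otimes\psi_t)$ and $\psi_t\circ T_t'=T_t\circ\psi_t$, the same coefficient computation that established the change of infinitesimals $(\mu_1',T_1')-(\mu_1,T_1)=d^1(\psi_1,0)$ shows that $(\mu_t',T_t')$ is an equivalent deformation with vanishing infinitesimal, $(\mu_1',T_1')=0$.

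The heart of the proof is an induction on the order of the lowest surviving term. Suppose $(\mu_t,T_t)$ is a deformation with $\mu_i=0$ and $T_i=0$ for all $1\le i\le n-1$. Extracting the coefficient of $t^n$ from the associativity-type identity and from the modified Rota-Baxter identity defining a deformation, and using that all intermediate $\mu_i,T_i$ vanish, every mixed term drops out and the two relations collapse to $\delta^2(\mu_n)=0$ and $-\partial^1(T_n)-\phi^2(\mu_n)=0$, i.e. $d^2(\mu_n,T_n)=0$. Thus $(\mu_n,T_n)$ is a $2$-cocycle; invoking $H^2_{mRBLA}(\mathfrak{g},\mathfrak{g})=0$ again produces $\psi_n$ with $(\mu_n,T_n)=d^1(\psi_n,0)$, and the formal isomorphism $\psi_t=\mathrm{Id}_{\mathfrak{g}}-\psi_n t^n$ transforms $(\mu_t,T_t)$ into an equivalent deformation agreeing with the trivial deformation $(\mu,T)$ up to and including order $t^n$. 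Since each stage only modifies coefficients of order $\ge n$, the successive isomorphisms compose into a well-defined formal power series, and in the limit $(\mu_t,T_t)$ is equivalent to the trivial deformation; hence $(\mathfrak{g},\mu,T)$ is rigid.

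The main obstacle is the inductive cocycle step: I must check carefully that, once the lower-order terms are normalized to zero, the order-$n$ parts of both defining identities reassemble precisely into the two components of $d^2(\mu_n,T_n)$ — in particular that the Rota-Baxter part recombines into $\phi^2(\mu_n)$ exactly as in the $n=1$ computation. A secondary point demanding care is the realizability of each coboundary in the restricted form $d^1(\psi_n,0)$, so that it can genuinely be implemented by a formal isomorphism $\psi_t$ rather than by an arbitrary element of $\mathrm{im}(d^1)$.
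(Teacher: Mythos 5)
Your overall strategy is exactly the paper's: the classical Gerstenhaber induction, absorbing the lowest-order nonzero term of the deformation by a formal isomorphism $\psi_t = Id_{\mathfrak{g}} - \psi_n t^n$ at each stage. The inductive cocycle step that you single out as the main obstacle is fine, and in fact the paper treats it even more briefly than you do (it simply asserts that $(\bar{\mu}_2,\bar{T}_2)$ is again a $2$-cocycle and says ``by repeating the arguments''). The one genuine gap is precisely the point you flag at the end and then leave unresolved: the realizability of the cocycle $(\mu_n,T_n)$ as a coboundary of the restricted form $d^1(\psi_n,0)$. This does \emph{not} follow directly from $H^2_{mRBLA}(\mathfrak{g},\mathfrak{g})=0$, because the degree-one cochain group is not $C^1_{LA}(\mathfrak{g},\mathfrak{g})$ alone: by definition $C^1_{mRBLA}(\mathfrak{g},\mathfrak{g}) = C^1_{LA}(\mathfrak{g},\mathfrak{g}) \oplus C^0_{mRBO}(\mathfrak{g},\mathfrak{g})$, where $C^0_{mRBO}(\mathfrak{g},\mathfrak{g}) = Hom(\mathbb{K},\mathfrak{g}) \cong \mathfrak{g}$ is nonzero. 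So the vanishing of $H^2$ only yields a pair $(\alpha,x)$ with
\[
(\mu_n,T_n) = d^1(\alpha,x) = \bigl(\delta^1(\alpha),\, -\partial^0(x) - \phi^1(\alpha)\bigr),
\]
and the extra term $-\partial^0(x)$ in the second component is not visibly of the form $-\phi^1(\mbox{something})$, which is what a formal isomorphism can produce.

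The fix is short, and it is exactly what the paper supplies: set $\psi_n := \alpha + \delta^0(x)$. Then $\delta^1(\psi_n) = \delta^1(\alpha) = \mu_n$, since $\delta^1 \circ \delta^0 = 0$, and
\[
\phi^1(\psi_n) = \phi^1(\alpha) + \phi^1(\delta^0(x)) = \phi^1(\alpha) + \partial^0(x),
\]
where the last equality is the $n=0$ case of the commutation relation in Lemma \ref{lemma 4.2}. Hence $-\phi^1(\psi_n) = -\phi^1(\alpha) - \partial^0(x) = T_n$, so indeed $(\mu_n,T_n) = d^1(\psi_n,0)$, and your formal isomorphism $\psi_t = Id_{\mathfrak{g}} - \psi_n t^n$ is available at every stage. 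With this one observation inserted, your argument closes and coincides with the paper's proof.
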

\begin{proof}
Let $(\mu_t, T_t)$ be a formal one-parameter deformation of the modified Rota-Baxter Leibniz algebra $(\mathfrak{g}, \mu,T)$. From Theorem \ref{infy-co}, $(\mu_1,T_1)$ is a $2$-cocycle 
and as  $H^2_{mRBLA}(\mathfrak{g},\mathfrak{g})=0$, thus, we have $(\mu_1,T_1)=d^1(\alpha,x)$, where $(\alpha,x) \in C^1_{LA}(\mathfrak{g},\mathfrak{g})\oplus Hom(\mathbb{K},\mathfrak{g}),~ \mathbb{K}$ being the ground field of the Leibniz algebra $(\mathfrak{g}, [~,~]).$ \\
Therefore, $\mu_1=\delta ^1 (\alpha)$ and $T_1=-\partial^0(x)-\phi^1(\alpha).$ Now, let $\psi_1=\alpha + \delta^0(x)$. Then $\mu_1= \delta^1(\psi_1)$ and $T_1=-\phi^1(\psi_1),~\text{as}~\phi ^1(\delta^0(x))=\partial^0(x)$.\\
Let $\psi_t=Id_{\mathfrak{g}}-\psi_1 t$, then $(\bar{\mu}_t,\bar{T}_t)$ is a formal one-parameter deformation, where
\[\bar{\mu}_t=\psi_t^{-1} \circ \mu_t \circ (\psi_t \circ \psi_t);~~ \bar{T}_t=\psi_t^{-1} \circ T_t \circ \psi_t. \]
It can be shown that $\bar{\mu_1}=0, \bar{T_1}=0$. Hence,
\begin{align*}
\bar{\mu}_t= \mu +\bar{\mu}_2t^2+\cdots ,  \\
\bar{T}_t= T +\bar{T}_2t^2+\cdots .
\end{align*}
 Again, one can show that $(\bar{\mu}_2, \bar{T}_2)$ is a $2$-cocycle. So by repeating the arguments, we can show that $(\mu_t, T_t)$ is equivalent to the trivial deformation. Hence, $(\mathfrak{g} , \mu,T)$ is rigid.
\end{proof}

\section{Abelian extension of modified Rota-Baxter Leibniz algebra of weight $\lambda$}\label{sec6}
Let $V$ be any vector space. We can always define a bilinear product on $V$ by $[a,b]=0$, i.e., $\mu (a,b)=0$ for all $a,b \in V$. If $T_V$ be a linear operator on $V$, then $(V,\mu,{T_V})$ is a modified Rota-Baxter operator of weight $\lambda$. Now we introduce the definition of the abelian extension of the modified Rota-Baxter Leibniz algebra of weight $\lambda$.
\begin{definition}
Let $(\mathfrak{g},[~,~],T)$ be a modified Rota-Baxter Leibniz algebra of weight $\lambda$ and $V$ be a vector space. Now a modified Rota-Baxter Leibniz algebra $(\hat{\mathfrak{g}},[~,~]_{\wedge},{\hat{T}})$ is called an abelian extension of $(\mathfrak{g},[~,~],T)$ by $V$ if there exists a short exact sequence of morphisms of modified Rota-Baxter Leibniz algebra of weight $\lambda$ 
\[
\begin{tikzcd}
0 \arrow[r] & (V,\mu,{T_V}) \arrow[r ,"i"] & (\hat{\mathfrak{g}},[~,~]_{\wedge},{\hat{T}}) \arrow[r,"p"] & (\mathfrak{g},[~,~],{T}) \arrow [r] & 0 .
\end{tikzcd} 
\]

\end{definition}
Therefore, for an abelian extension of the modified Rota-Baxter Leibniz algebra $(\mathfrak{g},[~,~],T)$ of weight $\lambda$, we have a commutative diagram :
\[
\begin{tikzcd}
0 \arrow[r] & V \arrow[r ,"i"] \arrow[d,"T_V"]& \hat{\mathfrak{g}} \arrow[d,"\hat{T}"]\arrow[r,"p"] & \mathfrak{g} \arrow [r]\arrow[d,"T"]  & 0 \\
 0 \arrow[r] & V \arrow[r ,"i"] & \hat{\mathfrak{g}} \arrow [r,"p"]  &  \mathfrak{g} \arrow [r]  & 0,
\end{tikzcd}
\]
where $\mu (a,b)=0$ for all $a,b \in V.$
\begin{definition}
Two abelian extensions $(\hat{\mathfrak{g}},[~,~]_{\wedge_1},{\hat{T_1}})$ and $(\hat{\mathfrak{g}},[~,~]_{\wedge_2},{\hat{T_2}})$ of a modified Rota-Baxter Leibniz algebra $(\mathfrak{g},[~,~],T)$ of weight $\lambda$ by a vector space $V$ is called an isomorphism if there exists an isomorphism of modified Rota-Baxter Leibniz algebra $\xi: (\hat{\mathfrak{g}},[~,~]_{\wedge_1},{\hat{T_1}}) \rightarrow  (\hat{\mathfrak{g}},[~,~]_{\wedge_2},{\hat{T_2}})$ such that the following diagram commutes
\[
\begin{tikzcd}
0 \arrow[r] & (V,\mu,{T_V}) \arrow[r ,"i"] \arrow[d,equal]& (\hat{\mathfrak{g}},[~,~]_{\wedge_1},{\hat{T_1}})\arrow[d,"\xi"]\arrow[r,"p"] & (\mathfrak{g},[~,~],T) \arrow [r]\arrow[d,equal]  & 0 \\
 0 \arrow[r] & (V,\mu,{T_V}) \arrow[r ,"i"] & (\hat{\mathfrak{g}},[~,~]_{\wedge_2},{\hat{T_2}}) \arrow [r,"p"]  &  (\mathfrak{g},[~,~],T) \arrow [r]  & 0.
\end{tikzcd}
\]

\end{definition}
\begin{definition}
Let $(\hat{\mathfrak{g}},[~,~]_{\wedge},{\hat{T}})$ be an abelian extension of $(\mathfrak{g},\mu,T)$ by a vector space $V$. Then a linear map $s: \mathfrak{g} \rightarrow \hat{\mathfrak{g}}$ is called a section if $p \circ s= Id_{\mathfrak{g}}.$
\end{definition}
 Here we will get a new representation from an abelian extension of a modified Rota-Baxter Leibniz algebra of weight $\lambda$. Suppose $(\hat{\mathfrak{g}},[~,~]_{\wedge},{\hat{T}})$ be an abelian extension of $(\mathfrak{g},[~,~],T)$ by a vector space  $V$ and $s: \mathfrak{g} \rightarrow \hat{\mathfrak{g}}$ be a section of it. We define $\bar{l}_V: \mathfrak{g} \otimes V \rightarrow V$ and $\bar{r}_V: V \otimes \mathfrak{g} \rightarrow V $ respectively by $\bar{l}_V(a,u)=[s(a),u]_{\wedge}$ and $\bar{r}_V(u,a)=[u,s(a)]_{\wedge}$, for all $a\in \mathfrak{g}, u \in V.$
\begin{theorem}

Let $(\hat{\mathfrak{g}},[~,~]_{\wedge},{\hat{T}})$ be an abelian extension of a modified Rota-Baxter Leibniz algebra $(\mathfrak{g},[~,~],T)$ of weight $\lambda$ by $(V,\mu,{T_V})$ and  $s: \mathfrak{g} \rightarrow \hat{\mathfrak{g}}$ be a section of it. Then, $(V,\bar{l}_V,\bar{r}_V,T_V)$ is a  representation of modified Rota-Baxter Leibniz algebra  $(\mathfrak{g},[~,~],T)$ of weight $\lambda.$

\end{theorem}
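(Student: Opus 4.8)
The plan is to transport every identity to be verified from the ambient structure $(\hat{\mathfrak{g}},[~,~]_{\wedge},\hat{T})$, exploiting that $V$ sits inside $\hat{\mathfrak{g}}$ as an abelian ideal. First I would record the structural facts that make everything work. Identifying $V$ with $i(V)=\ker p$, the hypothesis $\mu=0$ on $V$ means $[u,u']_{\wedge}=0$ for all $u,u'\in V$; moreover $V$ is an ideal, since for $a\in\mathfrak{g}$ and $u\in V$ one has $p([s(a),u]_{\wedge})=[a,p(u)]=0$, so $\bar{l}_V(a,u)=[s(a),u]_{\wedge}\in V$ and likewise $\bar{r}_V(u,a)\in V$. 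From the commuting left square of the extension diagram, $\hat{T}|_V=T_V$. Finally, because $p$ is a morphism of modified Rota-Baxter Leibniz algebras and $p\circ s=\mathrm{Id}_{\mathfrak{g}}$, the two ``defect'' elements
\[ \alpha(a,b):=[s(a),s(b)]_{\wedge}-s([a,b]), \qquad \beta(a):=\hat{T}(s(a))-s(Ta) \]
both lie in $\ker p=V$.

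Next I would verify the three Leibniz-representation axioms for $(V,\bar{l}_V,\bar{r}_V)$. For the first, I expand $\bar{l}_V(a,\bar{l}_V(b,u))=[s(a),[s(b),u]_{\wedge}]_{\wedge}$ and apply the Leibniz identity of $\hat{\mathfrak{g}}$ to rewrite it as $[[s(a),s(b)]_{\wedge},u]_{\wedge}+[s(b),[s(a),u]_{\wedge}]_{\wedge}$. Replacing $[s(a),s(b)]_{\wedge}$ by $s([a,b])+\alpha(a,b)$ and noting that $[\alpha(a,b),u]_{\wedge}=0$ because both arguments lie in the abelian $V$, the first summand collapses to $\bar{l}_V([a,b],u)$, and the axiom $\bar{l}_V(a,\bar{l}_V(b,u))=\bar{l}_V([a,b],u)+\bar{l}_V(b,\bar{l}_V(a,u))$ follows. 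The remaining two axioms are obtained the same way from the other two forms of the Leibniz identity, each time using that the defect $\alpha$ bracketed against a vector of $V$ vanishes.

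Then I would check the two modified Rota-Baxter compatibility conditions for $T_V$. For the left one, I rewrite $\bar{l}_V(Ta,T_V v)=[s(Ta),\hat{T}v]_{\wedge}$. Since $s(Ta)=\hat{T}(s(a))-\beta(a)$ with $\beta(a),\hat{T}v\in V$, the defect term drops out and this equals $[\hat{T}(s(a)),\hat{T}v]_{\wedge}$. Applying the modified Rota-Baxter relation for $\hat{T}$ on $\hat{\mathfrak{g}}$ to the pair $(s(a),v)$ gives
\[ [\hat{T}(s(a)),\hat{T}v]_{\wedge}=\hat{T}\big([s(a),\hat{T}v]_{\wedge}+[\hat{T}(s(a)),v]_{\wedge}\big)+\lambda\,[s(a),v]_{\wedge}. \]
Inside the bracket, $[s(a),\hat{T}v]_{\wedge}=\bar{l}_V(a,T_V v)$ and $[\hat{T}(s(a)),v]_{\wedge}=[s(Ta),v]_{\wedge}+[\beta(a),v]_{\wedge}=\bar{l}_V(Ta,v)$, again killing the $\beta$-term by abelianness; since these elements lie in $V$ and $\hat{T}|_V=T_V$, applying $\hat{T}$ reproduces $T_V(\bar{l}_V(Ta,v)+\bar{l}_V(a,T_V v))$, while $\lambda[s(a),v]_{\wedge}=\lambda\bar{l}_V(a,v)$. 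This is exactly the required identity, and the right-hand condition follows symmetrically from the companion relation for $\hat{T}$ and the formula $\bar{r}_V(u,a)=[u,s(a)]_{\wedge}$.

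The main obstacle is bookkeeping rather than conceptual: one must keep careful track that every discrepancy between $s$ applied to a bracket (resp.\ $s\circ T$) and the corresponding operation carried out in $\hat{\mathfrak{g}}$ lands in the kernel $V$, so that it is annihilated as soon as it is bracketed with another element of $V$. Once this abelian-ideal principle is isolated, each of the five identities reduces to the corresponding identity in $\hat{\mathfrak{g}}$ (the Leibniz identity, or the modified Rota-Baxter relation for $\hat{T}$), making the verifications routine.
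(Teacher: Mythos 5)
Your proof is correct and takes essentially the same route as the paper's: you transport each axiom from $(\hat{\mathfrak{g}},[~,~]_{\wedge},\hat{T})$ by writing the section's defects $[s(a),s(b)]_{\wedge}-s([a,b])$ and $\hat{T}(s(a))-s(Ta)$ as elements of the abelian ideal $V$, so they vanish when bracketed against $V$, and you use $\hat{T}|_V=T_V$ exactly as the paper does. The only difference is cosmetic: you spell out the well-definedness facts (that $\bar{l}_V$, $\bar{r}_V$ land in $V$ and why $\hat{T}$ restricts to $T_V$) which the paper leaves implicit.
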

\begin{proof}
Observe that $s([a,b])-[s(a),s(b)]_{\wedge} \in V$, therefore $[s([a,b]),u]_{\wedge}=[[s(a),s(b)]_{\wedge},u]_{\wedge}$ for all $a,b \in \mathfrak{g}, u \in V$.\\
 Observe that
\begin{align*}
&\bar{l}_V(a,\bar{l}_V(b,u))-\bar{l}_V([a,b],u)-\bar{l}_V(b,\bar{l}_V(a,u)) \\
&= \bar{l}_V(a,[s(b),u]_{\wedge})-[s([a,b]),u]_{\wedge}-\bar{l}_V(b,[s(a),u]_{\wedge})\\
&=[s(a),[s(b),u]_{\wedge}]_{\wedge}-[s([a,b]),u]_{\wedge}-[s(b),[s(a),u]_{\wedge}]_{\wedge}\\
&=[s(a),[s(b),u]_{\wedge}]_{\wedge}-[[s(a),s(b)]_{\wedge},u]_{\wedge}-[s(b),[s(a),u]_{\wedge}]_{\wedge}\\
&=0.
\end{align*}
Similarly, we have the following equations 
\[\bar{l}_V(a,\bar{r}_V(u,b))=\bar{r}_V(\bar{l}_V(a,u),b)+\bar{r}_V(u,[a,b]),\]
\[ \bar{r}_V(u,[a,b]= \bar{r}_V(\bar{r}_V(u,a),y)+\bar{l}_V(a,\bar{r}_V(u,b)),\]
for all $a,b \in \mathfrak{g}$ and $u\in V.$ Thus, $(V,\bar{l}_V, \bar{r}_{V})$ is a representation of the Leibniz algebra $(\mathfrak{g}, [~,~]).$\\
Again, $s(T(a))-\hat{T}(s(a)) \in V$ holds, therefore, $[s(T(a)),u]=[\hat{T}(s(a)),u]$ for all $a\in \mathfrak{g}, u \in V$. Hence, we have
\begin{align*}
&\bar{l}_V(T(a),T_V(u))=[s(T(a)), T_V(u)]_{\wedge}=[\hat{T}(s(a)),\hat{T}(u)]_{\wedge}\\
&=\hat{T}([\hat{T}(s(a)),u]_{\wedge}+[s(a),\hat{T}(u)]_{\wedge})+\lambda [s(a),u]_{\wedge}\\
&=T_V([s(T(a)),u]_{\wedge}+[s(a),T_V(u)]_{\wedge})+\lambda [s(a),u]_{\wedge}\\
&=T_V(\bar{l}_V(T(a),u)+\bar{l}_V(a,T_V(u)))+\lambda \bar{l}_V(a,u).
\end{align*}

Hence, $\bar{l}_V(T(a),T_V(u))=T_V(\bar{l}_V(T(a),u)+\bar{l}_V(a,T_V(u)))+\lambda \bar{l}_V(a,u)$ for all $a,b \in \mathfrak{g}$, and $u\in V.$

Similarly, we have \[\bar{r}_V(T_V(u),T(a))=T_V(\bar{r}_V(T_V(u),a)+\bar{r}_V(u,T(a)))+\lambda \bar{r}_V(u,a),\]
for all $a \in \mathfrak{g}$ and $u \in V.$ Hence, $(V,\bar{l}_V,\bar{r}_V,T_V)$ is a representation of modified  Rota-Baxter Leibniz algebra  $(\mathfrak{g},[~,~],T)$ of weight $\lambda.$
\end{proof}
\begin{proposition}
Different choices of section of an abelian extension $(\hat{\mathfrak{g}},[~,~]_{\wedge},{\hat{T}})$ of a modified Rota-Baxter Leibniz algebra $(\mathfrak{g},[~,~],T)$ of weight $\lambda$ by $(V,\mu,{T_V})$ gives the same modified Rota-Baxter representation of weight $\lambda .$
\end{proposition}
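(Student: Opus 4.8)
The plan is to show that any two sections differ by a map landing inside $V$, and that the induced left and right actions are completely insensitive to this difference because the bracket $[~,~]_{\wedge}$ vanishes on $V$. First I would fix two sections $s_1, s_2 : \mathfrak{g} \rightarrow \hat{\mathfrak{g}}$, so that $p \circ s_1 = p \circ s_2 = Id_{\mathfrak{g}}$. Applying $p$ to their difference gives $p \circ (s_1 - s_2) = 0$, hence $(s_1 - s_2)(a) \in \ker p = i(V)$ for every $a \in \mathfrak{g}$. Identifying $V$ with its image under $i$, this says $s_1(a) - s_2(a) \in V$ for all $a$.

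Let $\bar{l}_V^{1}, \bar{r}_V^{1}$ and $\bar{l}_V^{2}, \bar{r}_V^{2}$ denote the left and right actions built from $s_1$ and $s_2$ respectively, as in the preceding theorem. Using bilinearity of $[~,~]_{\wedge}$, I would compute the difference of the left actions as
\[
\bar{l}_V^{1}(a,u) - \bar{l}_V^{2}(a,u) = [s_1(a),u]_{\wedge} - [s_2(a),u]_{\wedge} = [s_1(a)-s_2(a),\, u]_{\wedge}.
\]
The key observation is that both $s_1(a) - s_2(a)$ and $u$ lie in $V$, and since $i$ is a morphism of modified Rota-Baxter Leibniz algebras and the bracket on $V$ is $\mu = 0$, one has $[v,w]_{\wedge} = i(\mu(v,w)) = 0$ for all $v,w \in V$. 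Therefore the displayed difference vanishes, giving $\bar{l}_V^{1} = \bar{l}_V^{2}$. The same computation with the arguments in the opposite order yields
\[
\bar{r}_V^{1}(u,a) - \bar{r}_V^{2}(u,a) = [u,\, s_1(a)-s_2(a)]_{\wedge} = 0,
\]
so that $\bar{r}_V^{1} = \bar{r}_V^{2}$ as well.

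Finally, the operator component $T_V$ of the representation is the fixed modified Rota-Baxter operator on $V$ supplied by the extension data, and it does not depend on the choice of section. Hence the quadruples $(V,\bar{l}_V^{1},\bar{r}_V^{1},T_V)$ and $(V,\bar{l}_V^{2},\bar{r}_V^{2},T_V)$ coincide, which is exactly the assertion. I do not anticipate a serious obstacle here: the only point requiring care is the vanishing of $[v,w]_{\wedge}$ for $v,w \in V$, which is precisely where the hypothesis that the extension is abelian (namely $\mu \equiv 0$ on $V$) enters, together with the fact that $i$ is a Leibniz algebra morphism.
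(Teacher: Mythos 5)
Your proof is correct and follows essentially the same route as the paper: both arguments observe that the difference $s_1 - s_2$ lands in $V$ and that the bracket $[~,~]_{\wedge}$ vanishes on $V \times V$ because the extension is abelian, so the induced actions coincide. Your version is in fact slightly more explicit than the paper's, spelling out the use of exactness ($\ker p = i(V)$) and of $i$ being a Leibniz morphism, both of which the paper leaves implicit.
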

\begin{proof}
Let $s_1$, $s_2$ be two distinct  sections of the abelian extension $(\hat{\mathfrak{g}},[~,~]_{\wedge},{\hat{T}})$. Now, define a map $\beta : \mathfrak{g} \rightarrow V$ by
\[\beta(a)=s_1(a)-s_2(a),~\mbox{for all }~ a \in \mathfrak{g}.\]
As $\mu (u,v)=0$ for all $ u,v \in V,$ we have
\begin{align*}
[s_1(a),u]_{\wedge}=[\beta(a)+s_2(a),u]_{\wedge}=[\beta(a),u]_{\wedge}+[s_2(a),u]_{\wedge}=[s_2(a),u]_{\wedge} .
\end{align*}
Similarly, it can be shown that, $[u,s_1(a)]_{\wedge}=[u,s_2(a)]_{\wedge}$ for all $a,b \in \mathfrak{g}, u \in V.$
Hence, two different sections give the same representation.
\end{proof}

\begin{proposition}
Let $(\hat{\mathfrak{g}},[~,~]_{\wedge},{\hat{T}})$ be an abelian extension of a modified Rota-Baxter Leibniz algebra $(\mathfrak{g},[~,~],T)$ of weight $\lambda$ by $(V,\mu,{T_V})$ and  $s: \mathfrak{g} \rightarrow \hat{\mathfrak{g}}$ be a section of it. Suppose the maps $\psi : \mathfrak{g}\otimes \mathfrak{g} \rightarrow V$ and $\chi : \mathfrak{g} \rightarrow V$ be defined by \begin{align*}
&\psi (a \otimes b)=[s(a),s(b)]_{\wedge}-s([a,b]),\\
&\chi (a)=\hat{T}(s(a))-s(T(a)), ~~ \mbox{for all}~ a,b \in \mathfrak{g}, \mbox{respcetively.}
\end{align*}
Then the cohomological class of $(\psi,\chi)$ is independent of the choice of sections.
\end{proposition}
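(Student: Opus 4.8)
The plan is to show that two sections yield $2$-cocycles differing by the coboundary of an explicit $1$-cochain, so they define the same class in $H^2_{mRBLA}(\mathfrak{g},V)$ (the cocycle property of $(\psi,\chi)$, needed for the class to exist, being the only input taken from outside this computation). First I would fix two sections $s_1,s_2$ and set $\beta:=s_1-s_2:\mathfrak{g}\to\hat{\mathfrak{g}}$. Since $p\circ s_1=p\circ s_2=\mathrm{Id}_{\mathfrak{g}}$, the map $\beta$ lands in $\ker p\cong V$, so $\beta\in C^1_{LA}(\mathfrak{g},V)=\mathrm{Hom}(\mathfrak{g},V)$; this is the cochain whose coboundary will measure the difference. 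I write $(\psi_i,\chi_i)$ for the pair attached to $s_i$.

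Next I would compare the Leibniz parts. Expanding $[s_1(a),s_1(b)]_{\wedge}=[s_2(a)+\beta(a),s_2(b)+\beta(b)]_{\wedge}$ by bilinearity and using that the bracket restricted to $V$ is $\mu=0$, the term $[\beta(a),\beta(b)]_{\wedge}$ vanishes. Recognizing $[s_2(a),\beta(b)]_{\wedge}=\bar{l}_V(a,\beta(b))$ and $[\beta(a),s_2(b)]_{\wedge}=\bar{r}_V(\beta(a),b)$ — these being the induced actions, independent of the section by the previous proposition — together with $s_1([a,b])-s_2([a,b])=\beta([a,b])$, I would obtain
\[
\psi_1(a,b)-\psi_2(a,b)=[a,\beta(b)]+[\beta(a),b]-\beta([a,b]),
\]
which is exactly $\delta^1(\beta)(a,b)$ from the $n=1$ instance of the Loday--Pirashvili coboundary formula.

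Then I would compare the operator parts. Using $s_1=s_2+\beta$ and the commutativity of the left square of the extension diagram, so that $\hat{T}$ restricts to $T_V$ on $V$ and hence $\hat{T}(\beta(a))=T_V(\beta(a))$, a direct computation gives
\[
\chi_1(a)-\chi_2(a)=T_V(\beta(a))-\beta(T(a)).
\]
Unwinding the definition of $\phi^1$ (for $n=1$ only the single-index, odd-$r$ summand survives, with coefficient $(-\lambda)^0=1$) yields $\phi^1(\beta)(a)=\beta(Ta)-T_V(\beta(a))$, so the displayed difference equals $-\phi^1(\beta)(a)$.

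Finally I would assemble the two computations: since $\partial^0(0)=0$,
\[
(\psi_1,\chi_1)-(\psi_2,\chi_2)=\big(\delta^1(\beta),\,-\partial^0(0)-\phi^1(\beta)\big)=d^1(\beta,0),
\]
a $2$-coboundary, so the two pairs represent the same class in $H^2_{mRBLA}(\mathfrak{g},V)$. The step needing the most care is the bookkeeping in the middle two paragraphs: correctly isolating the section-independent actions $\bar{l}_V,\bar{r}_V$ inside the bracket expansion, matching all signs against $\delta^1$, and extracting the single surviving term of $\phi^1$ with the right power of $-\lambda$.
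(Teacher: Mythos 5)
Your proposal is correct and follows essentially the same route as the paper: set $\beta = s_1 - s_2$, observe it lands in $V$, and show $(\psi_1,\chi_1)-(\psi_2,\chi_2)=(\delta^1(\beta),-\phi^1(\beta))=d^1(\beta,0)$. Your write-up is in fact slightly more careful than the paper's, since you make explicit that $[\beta(a),\beta(b)]_{\wedge}=0$, that the bracket terms are the induced actions $\bar{l}_V,\bar{r}_V$, and that the coboundary is $d^1(\beta,0)$ rather than the paper's loosely written $d^1(\beta)$.
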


\begin{proof}
Suppose $s_1$ and $s_2$ are two different abelian extension sections. Now define $\beta : \mathfrak{g} \rightarrow V$ by
\[\beta(a)=s_1(a)-s_2(a),~\mbox{for all }~ a \in \mathfrak{g}.\]
Note that $\mu (u,v)=0$ for all $ u,v \in V.$
Hence, for all $a,b \in \mathfrak{g}, u \in V,$ we have
\begin{align*}
\psi_1(a,b)
&=[s_1(a),s_1(b)]_{\wedge}-s_1([a,b])\\
&=[s_2(a)+\beta(a),s_2(b)+\beta(b)]_{\wedge}-(s_2([a,b])+\beta ([a,b]))\\
&=[s_2(a),s_2(b)]_{\wedge}-s_2([a,b])+[\beta(a),s_2(b)]_{\wedge}+[s_2(a),\beta (b)]_{\wedge}-\beta([a,b])\\
&=\psi_2(a,b)+\delta^1 (\beta)(a,b).
\end{align*}
Also,
\begin{align*}
\chi_1(a) &=\hat{T}(s_1(a))-s_1(T(a)) \\
&=\hat{T}(s_2(a)+\beta(a))-s_2(T(a))-\beta(T(a))\\
&=\chi_2(a)+T_V(\beta(a))-\beta (T(a))\\
 &=\chi_2(a)-\phi^1(\beta)(a).
\end{align*}
Therefore, $(\psi_1,\chi_1)-(\psi_2,\chi_2)=(\delta^1 (\beta), -\phi^1(\beta))=d^1(\beta)$. Hence, $(\psi_1,\chi_1)$ and $(\psi_2,\chi_2)$ are in the same cohomology class $H^2_{mRBLA}(\mathfrak{g},V)$.
\end{proof}
\begin{theorem}
Any two isomorphic abelian extensions of a modified Rota-Baxter Leibniz algebra $(\mathfrak{g},[~,~],T)$  of weight $\lambda$ by $(V,\mu,{T_V})$ give the same element in $H^2_{mRBLA}(\mathfrak{g}, V).$
\end{theorem}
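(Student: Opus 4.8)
The plan is to exploit the isomorphism $\xi$ to transport a section of one extension to a section of the other, and then to observe that the two associated $2$-cocycles coincide exactly as cochains. Since the class in $H^2_{mRBLA}(\mathfrak{g},V)$ is already known to be independent of the chosen section by the previous proposition, this will conclude the proof.

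First I would fix a section $s_1:\mathfrak{g}\to\hat{\mathfrak{g}}$ of the first extension $(\hat{\mathfrak{g}},[~,~]_{\wedge_1},\hat{T_1})$, so that $p\circ s_1=Id_{\mathfrak{g}}$. Because the right-hand square of the isomorphism diagram commutes we have $p\circ\xi=p$, and hence $s_2:=\xi\circ s_1$ satisfies $p\circ s_2=p\circ\xi\circ s_1=Id_{\mathfrak{g}}$; that is, $s_2$ is a section of the second extension $(\hat{\mathfrak{g}},[~,~]_{\wedge_2},\hat{T_2})$. I would compute the cocycles of both extensions using this matched pair of sections.

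Now let $(\psi_k,\chi_k)$ denote the cocycles attached to $s_k$, so $\psi_k(a,b)=[s_k(a),s_k(b)]_{\wedge_k}-s_k([a,b])$ and $\chi_k(a)=\hat{T_k}(s_k(a))-s_k(T(a))$. Since $\xi$ is a morphism of modified Rota-Baxter Leibniz algebras it satisfies $\xi([x,y]_{\wedge_1})=[\xi(x),\xi(y)]_{\wedge_2}$ and $\xi\circ\hat{T_1}=\hat{T_2}\circ\xi$; substituting $s_2=\xi\circ s_1$ and using linearity of $\xi$ yields $\psi_2(a,b)=\xi(\psi_1(a,b))$ and $\chi_2(a)=\xi(\chi_1(a))$. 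Commutativity of the left-hand square gives $\xi\circ i=i$, i.e. $\xi|_V=Id_V$. Applying $p$ to the defining expressions and using $p\circ s_1=Id_{\mathfrak{g}}$ together with $p\circ\hat{T_1}=T\circ p$ shows that $\psi_1(a,b)$ and $\chi_1(a)$ lie in $V=\ker p$, so $\xi$ fixes them. Therefore $\psi_2=\psi_1$ and $\chi_2=\chi_1$, and in particular $(\psi_1,\chi_1)$ and $(\psi_2,\chi_2)$ represent the same class in $H^2_{mRBLA}(\mathfrak{g},V)$.

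Finally, to conclude for arbitrary (not necessarily matched) sections, I would appeal to the previous proposition on section-independence: the class of the first extension computed with any section equals the class computed with $s_1$, which by the above equals the class computed with $s_2$, which equals the class of the second extension computed with any section. The argument is essentially formal, so I do not expect a genuine obstacle; the only step requiring care is confirming that $\psi_1$ and $\chi_1$ are $V$-valued so that the relation $\xi|_V=Id_V$ can be applied, and this is precisely where exactness of the sequences and the compatibility of $\xi$ with $p$ and with the Rota-Baxter operators enter.
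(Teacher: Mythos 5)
Your proposal is correct and follows essentially the same route as the paper: transport the section $s_1$ via $\xi$ to $s_2=\xi\circ s_1$, use the morphism properties of $\xi$ to get $\psi_2=\xi\circ\psi_1$ and $\chi_2=\xi\circ\chi_1$, and then use $\xi|_V=Id_V$ to conclude the cocycles coincide. If anything, you are more careful than the paper, since you explicitly verify that $\psi_1$ and $\chi_1$ take values in $V=\ker p$ before applying $\xi|_V=Id_V$, and you explicitly invoke the section-independence proposition to handle arbitrary sections.
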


\begin{proof}
Let $(\hat{\mathfrak{g}},[~,~]_{\wedge_1},{\hat{T_1}})$ and $(\hat{\mathfrak{g}},[~,~]_{\wedge_2},{\hat{T_2}})$ be two isomorphic abelian extensions of the modified Rota-Baxter Leibniz algebra $(\mathfrak{g},[~,~],T)$ of weight $\lambda$  by $(V,\mu,T)$. Let $s_1$ be a section of $(\hat{\mathfrak{g}},[~,~]_{\wedge_1},{\hat{T_1}})$. Now, $(\xi \circ s_1)$ is a section of $(\hat{\mathfrak{g}},[~,~]_{\wedge_2},{\hat{T_2}})$ as $p_2 \circ (s \circ s_1)= Id_{\mathfrak{g}}$ and $p_2 \circ s =p_1$, where $\xi$ is the map between the two abelian extensions.
Now define $s_2 :=\xi \circ s_1$. Since $\xi$ is a homomorphism of modified Rota-Baxter Leibniz algebras of weight $\lambda$ with  $\xi|_{V}=Id_V,~\mbox{and} ~\xi ([s_1(a),u]_{\wedge_1})=[s_2(a),u]_{\wedge_2}$ . Therefore,  $\xi|_{V}: V \rightarrow V$ is compatible with the induced representations.\\
Now, for all $a,b \in \mathfrak{g}$
\begin{align*}
&\psi_2(a \otimes b)=[s_2(a),s_2(b)]_{\wedge_2}-s_2([a,b])=[\xi(s_1(a)),\xi(s_1(b)])_{\wedge_2}-\xi(s_1([a,b]))\\
&=\xi ([s_1(a),s_1(b)]_{\wedge_1}-s_1([a,b]))=\xi (\psi_1 (a \otimes b))=\psi_1(a\otimes b),
\end{align*}
and
\begin{align*}
&\chi_2(a)= \hat{T_2}(s_2(a))-s_2(T(a))=\hat{T_2}(\xi(s_1(a)))-\xi(s_1(T(a)))\\
&=\xi (\hat{T_1 }(s_1(a))-s_1(T(a)))=\xi(\chi_1(a))=\chi_1(a).
\end{align*}
Hence, any two isomorphic abelian extensions give the same element in $H^2_{mRBLA}(\mathfrak{g}, V).$
\end{proof}

{\bf Acknowledgements:} 
The second/corresponding author is supported by the Core Research Grant (CRG) of Science and Engineering Research Board (SERB), Department of Science and Technology (DST), Govt. of India. (Grant Number- CRG/2022/005332). 

\renewcommand{\refname}{REFERENCES}

\end{document}